\renewcommand{\v}{\textup{\textsf{v}}}
\theoremstyle{plain}
\newtheorem{thm}{Theorem}[section]
\newtheorem{lem}[thm]{Lemma}
\newtheorem{cor}[thm]{Corollary}
\newtheorem{conj}[thm]{Conjecture}
\noindent \emph{Proof.} {}{#1}{}}{\hfill
\theoremstyle{plain} % just in case the style had changed
\newcommand{\thistheoremname}{}
\newtheorem{genericthm}[section]{\thistheoremname}
\theoremstyle{definition}
\newcommand{\less}{\setminus}
 \def\dfn#1{{\sl #1}}
 \def\se{\succcurlyeq}
\begin{document}
\date{}
 
\title{Properties of $8$-contraction-critical graphs with no $K_7$ minor}
\author{Martin Rolek\thanks{Department  of Mathematics, Kennesaw State University, Kennesaw, GA 30060, USA.  E-mail: mrolek1@kennesaw.edu.} \hskip 1cm   Zi-Xia Song\thanks{Department  of Mathematics, University of Central Florida, Orlando, FL 32816, USA. Supported by  NSF award  DMS-1854903.  E-mail:    Zixia.Song@ucf.edu.}\hskip 1cm  Robin Thomas\thanks{School of Mathematics, Georgia Institute of Technology, 
Atlanta, GA 30332, USA}}

 \date{August 22, 2022}
\maketitle
\begin{abstract}
 Motivated by the famous  Hadwiger's Conjecture,    
  we study the properties of $8$-contraction-critical graphs with no $K_7$ minor; we prove that every  $8$-contraction-critical graph with no $K_7$ minor has at most one vertex of degree $8$, where a graph $G$ is $8$-contraction-critical if $G$  is not $7$-colorable but   every proper minor   of $G$ is $7$-colorable. This is one step in our effort to prove that  every graph with no $K_7$  minor is $7$-colorable, which remains open.  
\end{abstract}

\baselineskip=16pt
\section{Introduction}

   All graphs in this paper are finite and simple.  For a graph $G$ we use $|G|$, $e(G)$, $\delta (G)$,   $\alpha(G)$, $\chi(G)$ to denote the number
of vertices, number of edges,   minimum degree,  independence number, and chromatic number  of $G$, respectively.  A graph  $H$ is a \dfn{minor} of a graph $G$ if  $H$ can be
 obtained from a subgraph of $G$ by contracting edges.  We write $G\se H$ if 
$H$ is a minor of $G$.
In those circumstances we also say that  $G$ has an  \dfn{$H$ minor}.  A graph $G$ is $k$-contraction-critical if $\chi(G)=k$ but  $\chi(H)\le k-1$  for every proper minor $H$ of $G$. Let $G$ be a graph. For a vertex $x\in V(G)$, we will use $N(x)$ to denote the set of vertices in $G$ which are adjacent to $x$.
We define $N[x] = N(x) \cup \{x\}$.  The degree of $x$ is denoted by $d_G(x)$ or
simply $d(x)$.   
The subgraph of $G$ induced by $A\subseteq V(G)$, denoted by $G[A]$, is the graph with vertex set $A$ and edge set $\{xy \in E(G) \mid x, y \in A\}$.   For an integer $k$,  a $k$-vertex in $G$ is a vertex of degree $k$, and a $k$-clique of $G$ is a set of $k$ pairwise adjacent vertices in $G$.     We define $[k]=\{1, \ldots, k\}$ for all $k\ge1$. We use $K_n, C_n, P_n$ to denote the complete graph, cycle, and path on $n$ vertices, respectively. \medskip

Our work is motivated by  the famous  Hadwiger's Conjecture~\cite{Had43}.

\begin{conj}[Hadwiger's Conjecture~\cite{Had43}]\label{HC} Every graph with no $K_t$ minor is $(t-1)$-colorable. 
\end{conj}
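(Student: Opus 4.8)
\noindent The plan is to prove Hadwiger's Conjecture by the standard reduction to a minimum counterexample. Fix $t$ and suppose some graph with no $K_t$ minor requires $t$ colors; among all such graphs let $G$ have $|G|$ minimum and, subject to that, $e(G)$ minimum. Then $G$ is $t$-contraction-critical: deleting a vertex or contracting an edge produces a proper minor of $G$, which still has no $K_t$ minor (the class of $K_t$-minor-free graphs is closed under subgraphs and contractions), hence is $(t-1)$-colorable. So it suffices to prove the contraction-critical reformulation: \emph{every $t$-contraction-critical graph has a $K_t$ minor}. This is exactly the statement the present paper attacks for $t=7$.

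Next I would assemble the classical structure theory of $t$-contraction-critical graphs: by Dirac's argument $\delta(G)\ge t-1$; by theorems of Dirac and Mader, $G$ has no clique cutset of order $\le t-2$ used separably, $G$ is highly connected, and low-degree vertices cannot cluster (e.g.\ no two adjacent vertices of degree $t-1$, and Mader-type inequalities bound the edges incident to small-degree vertices). For small $t$ one then pushes structural and topological facts about $K_t$-minor-free graphs to the finish: $t\le 4$ is elementary, $t=5$ is Wagner's theorem reducing to the planar case, and $t=6$ is Robertson--Seymour--Thomas, who prove a minimal counterexample would be a planar graph plus one apex vertex and invoke the Four Colour Theorem. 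The program for $t=7$ is to imitate this but with much heavier bookkeeping: Mader's extremal bound gives $e(G)\le 5|G|-15$ for $K_7$-minor-free $G$, while $\delta(G)\ge 7$ gives $e(G)\ge \tfrac{7}{2}|G|$, so the strategy is to restrict the degree sequence step by step --- bounding the number of $7$-vertices, then the number of $8$-vertices (the present theorem gives at most one), and so on --- until a counting or discharging argument makes the configuration impossible or exhibits a $K_7$ minor.

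Thus the key steps, in order, are: (1) reduce to proving every $t$-contraction-critical graph has a $K_t$ minor; (2) establish $\delta\ge t-1$, high connectivity, no small clique cutsets, and no clustering of low-degree vertices; (3) for $t\le 6$ invoke the known structure theorems (ultimately the Four Colour Theorem); (4) for $t=7$, combine Mader's edge bound with precise control of the low-degree vertices and a discharging/counting argument to force a contradiction; and (5) for $t\ge 8$, where no linear extremal bound holds, hope that the structure theory of contraction-critical graphs is nonetheless strong enough, or that the Kostochka--Thomason density bound can be improved all the way to the exact value.

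The main obstacle is concentrated in steps (4) and (5), and it is precisely why the conjecture is open. For $t=7$ the degree-sequence restrictions accumulate only slowly --- this paper secures one of them, ``at most one $8$-vertex'' --- and it is not yet clear how many such lemmas, or what global argument, will close the case. For $t\ge 8$ the obstruction is more severe: the Kostochka--Thomason theorem shows $K_t$-minor-free graphs can have average degree $\Theta(t\sqrt{\log t})$, so the density approach is intrinsically off by a $\sqrt{\log t}$ factor, and even the conjectured linear extremal bound $e(G)=O(t\,|G|)$ would yield only $\chi(G)=O(t)$, not $\chi(G)\le t-1$. In short, I expect steps (1)--(3) to go through routinely (they are known), while steps (4) and (5) are where all the difficulty resides; the realistic near-term target is to finish $t=7$ by accumulating enough structural lemmas about $8$-contraction-critical $K_7$-minor-free graphs, of which this theorem is one.
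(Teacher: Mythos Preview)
The statement you were asked to prove is Hadwiger's Conjecture itself, which is famously open; the paper states it as a conjecture and does not prove it, so there is no ``paper's own proof'' to compare your attempt against. Your write-up is not a proof either, and you are explicit about this: what you have produced is a (reasonable and well-informed) research programme, correctly identifying that steps (4) and (5) are exactly where the problem is open. As a proof of the stated conjecture, then, there is a genuine gap --- indeed the whole gap --- but you already acknowledge it.

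One correction worth making: you write that the contraction-critical reformulation ``is exactly the statement the present paper attacks for $t=7$,'' but that is slightly off. Hadwiger's Conjecture for $t=7$ asserts that $K_7$-minor-free graphs are $6$-colorable, and a minimal counterexample would be a \emph{$7$}-contraction-critical graph with no $K_7$ minor. The present paper instead studies \emph{$8$}-contraction-critical graphs with no $K_7$ minor, i.e.\ it targets the weaker (and also open) statement that $K_7$-minor-free graphs are $7$-colorable. So the paper is one step removed from Hadwiger's $t=7$ case, not a direct attack on it. Relatedly, your degree bound $\delta(G)\ge t-1$ is the soft criticality bound; in the setting of the paper (an $8$-contraction-critical graph with no $K_7$ minor) one immediately improves this to $\delta(G)\ge 8$, since a $7$-vertex would force $G[N(v)]=K_7$ by Dirac's lemma $\alpha(G[N(v)])\le d(v)-k+2$.
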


\cref{HC} is  trivially true for $t\le3$, and reasonably easy for $t=4$, as shown independently by Hadwiger~\cite{Had43} and Dirac~\cite{Dirac52}. However, for $t\ge5$, Hadwiger's conjecture implies the Four Color Theorem~\cite{AH77,AHK77}.   Wagner~\cite{Wagner37} proved that the case $t=5$ of Hadwiger's conjecture is, in fact, equivalent to the Four Color Theorem, and the same was shown for $t=6$ by Robertson, Seymour and  Thomas~\cite{RST}. Despite receiving considerable attention over the years, Hadwiger's Conjecture remains wide open for all $t\ge 7$,  and is    considered among the most important problems in graph theory and has motivated numerous developments in graph coloring and graph minor theory.  Proving that graphs with no $K_7$ minor are $6$-colorable is thus the first case of Hadwiger's  Conjecture that is still open.
It  is not even known yet whether  every graph with no $K_7$ minor is $7$-colorable.  Until very recently  the best known upper bound on the chromatic number of graphs with no $K_t$ minor  is $O(t (\log t)^{1/2})$,  obtained independently by Kostochka~\cite{Kostochka82,Kostochka84} and Thomason~\cite{Thomason84}, while Norin, Postle and the second  author~\cite{NPS20} showed that every graph  with no $K_t$ minor  is $O(t (\log t)^\beta)$-colorable for every $\beta>\frac14$. The current record 
 is $O(t\log \log t)$ due to
Delcourt and Postle~\cite{DelcourtPostle}.   K\"{u}hn  and Osthus~\cite{KuhOst03c} proved that Hadwiger's Conjecture is true for $C_4$-free graphs of sufficiently large chromatic number,  and for all graphs of girth at least $19$.  Kostochka~\cite{Kos14}  proved that   graphs with no $K_{s,t}$ minor  are $(s+t-1)$-colorable  for   $t>C(s\log s)^3$. We refer the reader to   a recent  paper of Lafferty and the second author~\cite{K84} on partial results towards Hadwiger's Conjecture for $t\le 9$; and  recent surveys~\cite{CV2020, K2015,Seymoursurvey} for further  background on Hadwiger's Conjecture. \medskip

The purpose of this paper  is to study the properties of $8$-contraction-critical graphs with no $K_7$ minor. This is one step in our effort to prove that  every graph with no $K_7$  minor is $7$-colorable, which remains open as mentioned above. We prove the following main result. 

\begin{thm}\label{t:main}
Let $G$ be an $8$-contraction-critical graph with no $K_7$ minor.  Let $n_i$ denote the number of $i$-vertices  in  $G$ for each $i\in\{8,9\}$.  Then 
\begin{enumerate}[(i)]
\item $8\le \delta(G)\le9$, 
\item $n_8\le 1$ and $n_9\ge 30-2n_8\ge 28$, and 
 \item for  each $9$-vertex  $v\in V(G)$,   either $G[N[v]]$ has a  $5$-clique,  or  $ \alpha(G[N(v)])=3$ and $1\le \delta(G[N(v)])\le4$.
\end{enumerate}
 
\end{thm}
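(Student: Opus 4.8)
The plan is to follow the standard contraction-critical machinery, pushing through density bounds from Mader-type results together with the structural restrictions imposed by forbidding a $K_7$ minor. First I would recall the basic properties of an $8$-contraction-critical graph $G$: it is $7$-connected (a standard fact, since a contraction-critical graph is $(k-1)$-connected and contracting across a small cutset would give a proper minor of the same chromatic number), and for every edge $xy$ the set $N(x)\cap N(y)$ contains no $(k-2)$-clique — here no $6$-clique — because otherwise contracting $xy$ yields a $K_8$-free obstruction in the wrong way; more importantly, for every edge $xy$ one has $d(x)+d(y)\ge 2k-3 = 13$ is too weak, so instead I would use the sharper Mader bound: every $k$-contraction-critical graph with $k\ge 7$ has at least $(k-2) + $ a linear-in-$n$ surplus of edges, giving $e(G)\ge \frac{13}{2}n - c$ for an explicit constant, and crucially the known edge bound for graphs with no $K_7$ minor, $e(G)\le \frac{13}{2}|G|-\binom{7}{2}/\text{something}$ — precisely the theorem (presumably cited earlier, e.g. in \cite{K84}) that $K_7$-minor-free graphs on $n\ge 7$ vertices have at most $\frac{13}{2}(n-2)$ edges, i.e.\ $e(G)\le 6n$ roughly, and more sharply $e(G) \le \frac{1}{2}(11n - 32)$ or the Jakobsen-type extremal count. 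Combining a lower bound on $e(G)$ from contraction-criticality with this upper bound yields (i), that $\delta(G)\le 9$, while $\delta(G)\ge 8$ is immediate since a vertex of degree $\le 7$ could be deleted and its color chosen last (using $7$-colorability of $G-v$ and $d(v)\le 7$).

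For part (ii) I would carry out a discharging argument. Assign to each vertex $v$ the charge $d(v) - \frac{13\cdot 2}{?}$ — concretely, using $\delta(G)\ge 8$ and the edge upper bound, the total charge is bounded; each $8$-vertex and each $9$-vertex is ``poor'' and must receive charge from neighbors, and the presence of two $8$-vertices, or fewer than $30-2n_8$ nine-vertices, would make the total charge exceed what the $K_7$-minor-free edge bound permits. The key local input here is a lemma (again presumably established earlier in the paper, or provable by a short case analysis on $N[v]$) restricting how low-degree vertices can cluster: an $8$-vertex $v$ in an $8$-contraction-critical graph with no $K_7$ minor has a very constrained neighborhood — in particular $G[N(v)]$ must be dense enough to be $6$-colorable-with-$v$-forced yet $K_6$-free in the relevant sense, which pins down $\alpha(G[N(v)])$ and forces neighbors of $v$ to have high degree. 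Counting edges incident to $N[v]$ for such a $v$ against the global bound gives $n_8\le 1$ and then the companion inequality $n_9\ge 30-2n_8$.

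Part (iii) is the genuinely structural step and I expect it to be the main obstacle. Fix a $9$-vertex $v$. Since $G$ is $8$-contraction-critical, $G-v$ is $7$-colorable and every such coloring must use all $7$ colors on $N(v)$ (else extend to $v$), which forces $\chi(G[N(v)])\le 7$ but with strong ``no free color'' behavior; moreover, for each pair of nonadjacent $u,w\in N(v)$, contracting a path or identifying $u,w$ through $v$ must not create a $K_7$ minor, which is exactly what controls $\alpha(G[N(v)])$. The dichotomy ``$G[N[v]]$ has a $5$-clique, or $\alpha(G[N(v)])=3$ and $1\le\delta(G[N(v)])\le 4$'' should come from: (a) if $\alpha(G[N(v)])\ge 4$, an independent set of size $4$ in $N(v)$ together with $v$ and the connectivity/criticality of $G$ lets one build a $K_7$ minor by routing three more branch sets through $G-N[v]$, using $7$-connectivity — this is the hard routing argument; (b) if $\alpha(G[N(v)])=2$ then $G[N(v)]$ has a large clique (complement of a triangle-free graph on $9$ vertices has a $5$-clique by Ramsey, $R(3,5)=14>9$ gives a set of $5$, actually one needs $\overline{G[N(v)]}$ triangle-free on $9$ vertices $\Rightarrow$ independence number $\ge 4$ in the complement, i.e.\ a $4$-clique, so one must push harder with the edge of $G[N[v]]$ to get a $5$-clique, or show $\alpha=2$ forces the $5$-clique outcome directly); and (c) once $\alpha(G[N(v)])=3$, the degree bound $\delta(G[N(v)])\le 4$ follows because a vertex of $N(v)$ adjacent to $5$ others in $N(v)$ would give a $6$-clique-minus-something leading to a $K_7$ minor with $v$, while $\delta(G[N(v)])\ge 1$ follows since an isolated vertex $u$ in $G[N(v)]$ has $N(u)\setminus\{v\}$ disjoint from $N(v)$-ish, contradicting a degree/criticality count. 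The crux — and where most of the paper's work presumably lies — is the minor-routing in case (a): translating ``many independent vertices in $N(v)$'' plus ``$G$ is $7$-connected and $8$-contraction-critical'' into ``$K_7$ minor,'' which typically requires a careful analysis of the graph $G-N[v]$, its connectivity after deletions, and an application of a rooted-minor or linkage lemma.
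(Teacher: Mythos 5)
Your plan has several concrete gaps; the most serious is part (ii). An absolute bound like $n_8\le 1$ cannot come out of a discharging/counting argument, which is the tool you propose: counting only yields density-type statements such as $2n_8+n_9\ge 30$ (which in the paper is literally the one-line degree-sum computation $8n_8+9n_9+10(|G|-n_8-n_9)\le 2e(G)\le 10|G|-30$, no discharging needed). The actual proof of $n_8\le 1$ is structural: one first shows that for every $8$-vertex $v$ the graph $G[N(v)]$ contains two \emph{disjoint} $4$-cliques --- itself a nontrivial step that uses $\alpha(G[N(v)])=2$, a classification of $K_4$-free graphs on $8$ vertices with independence number $2$ (the graph $H_8$), and the generalized Kempe-chain lemma producing disjoint paths outside $N[v]$ to build a $K_7$ minor in the bad cases. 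Each $8$-vertex then lies in two $5$-cliques meeting only at that vertex, and two distinct $8$-vertices would produce four $5$-cliques whose pairwise intersections are forced (by the Kawarabayashi--Toft three-$5$-cliques theorem and by explicit Menger/rooted-$K_4$ routings) into configurations that all yield a $K_7$ minor. None of this machinery appears in your sketch, and your "poor vertices must receive charge" heuristic gives no mechanism for it.

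There are also errors in the easier parts. The extremal function you quote is wrong: for $K_7$-minor-free graphs Mader gives $e(G)\le 5n-15$, and it is this bound (not $\tfrac{13}{2}n-c$ or $6n$, neither of which suffices) that yields $\delta(G)\le 9$. Your argument for $\delta(G)\ge 8$ ("delete $v$ and color it last") fails at degree exactly $7$, where the $7$ neighbors may use all $7$ colors; one needs Dirac's lemma $\alpha(G[N(v)])\le d(v)-k+2$, which for $d(v)=7$ forces $N(v)$ to be a $K_7$, contradicting $K_7$-minor-freeness. In part (iii), case (a) ($\alpha\ge 4$) is not "the hard routing argument" but the same one-line Dirac lemma; the genuinely hard content is elsewhere, namely showing in the $K_4$-free case that $\delta(G[N(v)])\le 4$ (which in the paper rests on a computer-verified classification of $K_4$-free $9$-vertex graphs with minimum degree at least $5$ and no $K_6$ minor, the only survivor being $\overline{K_3}+C_6$, which is then killed by adding one edge via an external path) and that $\delta(G[N(v)])\ge 1$ (again via $H_8$ and the disjoint-paths lemma). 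Your sketches for both of these steps are not arguments, so part (iii) is also incomplete.
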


Our proof of \cref{t:main} utilizes  the extremal function for $K_7$ minors (see \cref{t:exfun}),   the method for  finding   $K_7$  minors from three different  $5$-cliques    (see \cref{KNZ}), and generalized Kempe chains of contraction-critical graphs (see \cref{l:wonderful}).   
 
  \begin{thm}[Mader~\cite{exfun}]\label{t:exfun}
For each  $p\in [7]$, every graph on $n\ge p$ vertices
and at least $(p-2)n-{p-1\choose2}+1$ edges has a $K_p$ minor.
\end{thm}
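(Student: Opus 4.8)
We outline our plan. Let $G$ be an $8$-contraction-critical graph with no $K_7$ minor; the whole argument works by showing that whenever some neighborhood is too dense or too sparse one can produce a $K_7$ minor, or three ``different'' $5$-cliques and hence a $K_7$ minor by \cref{KNZ}, contradicting the hypothesis. First we record the standard consequences of $8$-contraction-criticality: $G$ is $8$-vertex-critical, so $\delta(G)\ge 7$ and $G$ is connected, and $G$ has no clique cutset (a standard property of contraction-critical graphs), which is what is needed to apply \cref{KNZ}. Since $G\not\se K_7$, \cref{t:exfun} with $p=7$ gives $e(G)\le 5|G|-15$, equivalently $\sum_{x\in V(G)}(10-d_G(x))\ge 30$. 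The main elementary tool is: for every edge $uv$ of $G$ and every $7$-coloring of $G/uv$, the vertices of $N(v)\setminus\{u\}$ that miss the color of the contracted vertex must realize \emph{all} six remaining colors, since otherwise the coloring extends to a $7$-coloring of $G$; combined with Kempe swaps, this pins down the color pattern, and then the adjacency pattern, on $N(v)$ for any low-degree $v$.

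\emph{Part (i).} If $\delta(G)\ge 10$, then $2e(G)\ge 10|G|>10|G|-30\ge 2e(G)$, impossible, so $\delta(G)\le 9$. For $\delta(G)\ge 8$, suppose $v$ has degree $7$ with neighbors $v_1,\dots,v_7$. Contracting $vv_1$ and applying the tool above, in any $7$-coloring of $G/vv_1$ the set $\{v_2,\dots,v_7\}$ is rainbow-colored; a Kempe swap between the colors of $v_i$ and $v_j$ then forces $v_i$ and $v_j$ into a common two-colored chain for every $i\ne j$. Feeding these chains into the generalized Kempe-chain lemma \cref{l:wonderful} and contracting, one obtains either a $K_6$ minor on $\{v_2,\dots,v_7\}$, hence a $K_7$ minor together with $v$, or three pairwise different $5$-cliques; both are impossible, so $G$ has no $7$-vertex and $8\le\delta(G)\le 9$.

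\emph{Parts (ii) and (iii).} By part (i), $\sum_x(10-d_G(x))\ge 30$ restricted to the degree-$8$ and degree-$9$ vertices (all other vertices contribute at most $0$) gives $2n_8+n_9\ge 30$, i.e. $n_9\ge 30-2n_8$, and with $n_8\le 1$ this yields $n_9\ge 28$. To see $n_8\le 1$: an $8$-vertex $v$ has ``color excess'' $1$ on $N(v)$, which via the tool and Kempe swaps forces a rigid color/chain structure on $G[N(v)]$ — this is what permits a single $8$-vertex, short of a contradiction — and two $8$-vertices cannot coexist, since in each case (their neighborhoods disjoint, overlapping, or joined by an edge) the two chain systems combine, using \cref{l:wonderful}, into a $K_7$ minor or three different $5$-cliques. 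For part (iii), fix a $9$-vertex $v$ with no $5$-clique in $G[N[v]]$, so $G[N(v)]$ has no $K_4$ and, in every $7$-coloring of $G-v$, the set $N(v)$ uses all seven colors with color excess exactly $2$. If $\alpha(G[N(v)])\le 2$, then $\overline{G[N(v)]}$ is triangle-free on nine vertices and hence has an independent set of size $4$ (as $R(3,4)=9$), i.e. $G[N(v)]\supseteq K_4$, a contradiction; if $\alpha(G[N(v)])\ge 4$, an independent $4$-set in $N(v)$ contains, by the excess-$2$ count, two vertices whose colors each occur exactly once on $N(v)$, a Kempe swap between these two colors either extends the coloring to $v$ or forces a chain, and iterating such swaps over the $4$-set via \cref{l:wonderful} produces a $K_7$ minor, again a contradiction; hence $\alpha(G[N(v)])=3$. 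Lastly $\delta(G[N(v)])\ge 1$, for an isolated vertex $x$ of $G[N(v)]$ puts the edge $vx$ in no triangle and the coloring analysis for $G/vx$ then contradicts $\chi(G)=8$ or $G\not\se K_7$; and $\delta(G[N(v)])\le 4$, since $\delta(G[N(v)])\ge 5$ makes $G[N(v)]$ (which has $\alpha=3$) dense enough that, together with $5$-cliques supplied elsewhere in $G$ and \cref{l:wonderful}, one exhibits three different $5$-cliques and applies \cref{KNZ}.

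The crux of the whole argument — and the step I expect to be by far the most delicate — is the recurring reduction of ``a low-degree vertex with a too-rich or too-poor neighborhood'' to an actual $K_7$ minor: one must track the color-multiplicity pattern on $N(v)$, the adjacency pattern inside $G[N(v)]$, and (for $n_8\le 1$) how two such neighborhoods intersect, and show that outside a narrow range of configurations a $K_7$ minor is forced via \cref{l:wonderful} and \cref{KNZ}. The counting ingredients, by contrast, are immediate consequences of \cref{t:exfun}.
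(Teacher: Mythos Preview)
Your proposal does not address the stated theorem at all. \cref{t:exfun} is Mader's classical extremal function for $K_p$ minors; the paper cites it and uses it as a tool, but does not prove it. A proof of \cref{t:exfun} would proceed by induction on $p$ (or on $n$) and edge-counting/contraction arguments going back to Mader~\cite{exfun}; nothing in your write-up attempts this.

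What you have written is instead a sketch of \cref{t:main}, the main theorem of the paper. Even read as such, several steps diverge from the paper's actual arguments and some are genuine gaps:
\begin{itemize}
\item For $\delta(G)\ge 8$, the paper does not use Kempe chains on a $7$-vertex; it applies \cref{l:alpha2} directly: a $7$-vertex $x$ would satisfy $\alpha(G[N(x)])\le 1$, forcing $G[N(x)]=K_7$.
\item To apply \cref{KNZ} you need $7$-connectedness (and $|G|\ge 19$), not merely ``no clique cutset''; the paper invokes \cref{mader} for this.
\item For $\alpha(G[N(v)])\le 3$ when $d(v)=9$, the paper again uses \cref{l:alpha2} in one line; your Kempe-swap argument for ruling out $\alpha\ge 4$ is not made precise.
\item Your argument that an isolated vertex $x$ in $G[N(v)]$ yields a contradiction (``$vx$ is in no triangle'') is not a valid deduction; the paper's argument for $\delta(G[N(v)])\ge 1$ goes through \cref{l:H8} and \cref{l:wonderful} to build an explicit $K_7$ minor.
\item For $\delta(G[N(v)])\le 4$, the paper uses the structural \cref{l:deg9} (that a $K_4$-free $9$-vertex graph with $\delta\ge 5$ either has a $K_6$ minor or is $\overline{K_3}+C_6$), then finishes with a short path-contraction; you instead gesture at ``three different $5$-cliques'', which is not how this step is done.
\end{itemize}
In short: wrong target, and as a sketch of \cref{t:main} the outline substitutes vague Kempe-chain reasoning for the paper's concrete uses of \cref{l:alpha2}, \cref{l:H8}, \cref{l:wonderful}, \cref{l:deg9}, and \cref{mader}.
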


A graph $G$ is said to be \dfn{apex} if there exists a vertex $v \in V(G)$ such that $G\backslash v$ is planar. The next theorem was  proved by Robertson, Seymour and Thomas~\cite{RST} to prove Hadwiger's Conjecture for $t=6$.  It is worth noting that there exists $6$-connected  apex graphs with no $K_6$ minor.

\begin{thm}\label{RST} Let $G$ be a $6$-connected non-apex graph.  If $G$  has  three  $4$-cliques, say $L_1, L_2, L_3$, such that $|L_i \cap L_j| \le 2, 1 \le i < j \le 3$, then $G \se K_6$.
\end{thm}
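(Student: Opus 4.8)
The plan is to argue by contradiction. Suppose $G$ is $6$-connected, non-apex, carries three $4$-cliques $L_1,L_2,L_3$ with $|L_i\cap L_j|\le 2$ for $1\le i<j\le 3$, and yet $G\not\se K_6$; the goal is to exhibit six pairwise disjoint, pairwise adjacent connected branch sets and reach a contradiction.

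\emph{Degenerate configurations first.} If $|L_1\cup L_2\cup L_3|=6$, then $|L_i\cap L_j|=2$ for every pair; writing $L_1=\{a,b,c,d\}$ and $L_2=\{c,d,e,f\}$, the hypothesis forces $L_3=\{a,b,e,f\}$, and since each $L_i$ is complete we get $G[L_1\cup L_2\cup L_3]\cong K_6$, a contradiction. So we may assume $|L_1\cup L_2\cup L_3|\ge 7$. After dealing with the cases where the $L_i$ overlap (by similar, more technical arguments --- rerouting through short internally disjoint paths provided by $6$-connectivity and Menger's theorem, or recursing on one side of a small separator) --- we may assume $L_1,L_2,L_3$ are pairwise disjoint $K_4$ subgraphs.

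\emph{The main construction.} Delete the four vertices of $L_1$; the graph $G-L_1$ is $2$-connected and still contains $L_2$ and $L_3$. If $G-L_1$ has a $K_5$ minor with branch sets $B_1,\dots,B_5$, then, using the $K_4$ on $L_1$ together with $6$-connectivity to route four disjoint paths out of $L_1$ and merge them with $L_1$ into one connected branch set $B_6$ meeting every $B_i$, we obtain a $K_6$ minor, a contradiction. If $G-L_1$ has no $K_5$ minor, apply Wagner's theorem: $G-L_1$ arises by clique-sums of order at most $3$ from planar graphs and copies of the Wagner graph $V_8$. Using $6$-connectivity to control how the four attachment vertices of $L_1$ meet this decomposition, and using the two surviving cliques $L_2,L_3$ to force the decomposition to be essentially a single planar summand (the $K_4$'s $L_2,L_3$ cannot be accommodated otherwise), one concludes that $G$ is apex --- contradicting the hypothesis --- unless instead a $K_6$ minor can be read off directly.

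\emph{Where the difficulty lies.} The crux --- and where the bulk of the RST argument sits --- is the interaction of the linking paths with the clique vertices and with one another: one must guarantee that the six candidate branch sets stay disjoint, connected, and mutually adjacent, which is exactly what breaks down in apex-like local configurations. Thus the real work is the case analysis organized around the non-apex hypothesis, together with the careful traversal of the Wagner decomposition in the $K_5$-minor-free branch; once those obstructions are excluded, assembling the $K_6$ minor from the three $4$-cliques via Menger's theorem is routine.
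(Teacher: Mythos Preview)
The paper does not prove this theorem; it merely states it as a result of Robertson, Seymour and Thomas~\cite{RST} and uses it for context. So there is no ``paper's own proof'' to compare your attempt against, and the appropriate thing here is simply to cite the result, not to supply an argument.

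That said, what you have written is not a proof but an outline with substantial gaps, and some of the individual steps are not correct as stated. Two places in particular:

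\begin{itemize}
\item In your ``main construction'', you claim that if $G-L_1$ has a $K_5$ minor with branch sets $B_1,\dots,B_5$, then six-connectivity lets you route four disjoint paths out of $L_1$ and merge them into a sixth branch set $B_6$ adjacent to every $B_i$. Four paths cannot guarantee adjacency to five branch sets, and six-connectivity of $G$ only yields $2$-connectivity of $G-L_1$; nothing here forces $L_1$ (or any connected enlargement of it disjoint from the $B_i$) to touch all five $B_i$. This step does not go through without a much more careful argument about where the $B_i$ sit relative to $N(L_1)$.
\item The Wagner-decomposition branch is precisely where the depth of the Robertson--Seymour--Thomas theorem lies, and you acknowledge as much. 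Saying that the two surviving $K_4$'s ``force the decomposition to be essentially a single planar summand'' and that one then ``concludes that $G$ is apex'' is the entire content of the theorem; it is not something one can assert without the lengthy structural analysis of~\cite{RST}. Your sketch gives no mechanism for this conclusion.
\end{itemize}

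In short: the degenerate case $|L_1\cup L_2\cup L_3|=6$ is handled correctly, but beyond that the proposal is a plan rather than a proof, and the two load-bearing steps above are either incorrect or entirely deferred. For the purposes of this paper, the right move is to cite~\cite{RST} and move on.
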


Theorem~\ref{RST} was then extended to $5$-cliques by Kawarabayashi and Toft~\cite{KT05} and later generalized by Kawarabayashi, Luo, Niu and Zhang~\cite{KLNZ05}.  

\begin{thm}[Kawarabayashi and Toft~\cite{KT05}]\label{KNZ}
Let $G$ be a $7$-connected graph with   $ |G|\ge 19$.  If $G$ contains three   $5$-cliques, say $L_1, L_2, L_3$, such that $|L_1 \cup L_2 \cup L_3| \ge 12$,  then  $G \se K_7$.
\end{thm}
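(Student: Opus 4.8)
\textbf{Proof strategy for \cref{KNZ}.}

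The plan is to follow the standard framework for producing a $K_7$ minor from three large cliques, adapting the Robertson--Seymour--Thomas argument for \cref{RST}. Write $L = L_1 \cup L_2 \cup L_3$, so $|L| \ge 12$. The first step is a reduction on the pairwise intersections: since each $L_i$ is a $5$-clique and $|L| \ge 12$, a counting argument shows we may assume $|L_i \cap L_j| \le 2$ for all $i < j$ (if some intersection were larger, the union of those two cliques would be too small to let the third push the total to $12$, or else we could replace one clique by a better-positioned $5$-clique using $7$-connectivity). This puts us in the situation where the three cliques overlap only lightly, which is what makes the "rerouting" arguments below go through.

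The core of the proof is then to build seven connected branch sets. The idea is to designate one of the cliques, say $L_1 = \{a_1, \dots, a_5\}$, to contribute five of the branch sets as singletons, and to use vertices of $L_2$ and $L_3$ (together with paths in $G$) to assemble the remaining two branch sets, each of which must be connected and must send an edge to each of the five singletons $a_1, \dots, a_5$ as well as to each other. Concretely, one picks two vertices $u \in L_2 \setminus L_1$ and $w \in L_3 \setminus L_1$ (which exist and can be chosen distinct and non-adjacent-constraints-free because $|L| \ge 12$ forces enough vertices outside $L_1$), and one grows branch set $B_6$ around $u$ and $B_7$ around $w$. Each of $u, w$ already dominates four vertices of its own clique; the missing adjacencies to $L_1$ are supplied by short paths obtained from a Menger/fan argument using $7$-connectivity: from any vertex we can route many internally disjoint paths to $L_1$, and $|G| \ge 19$ guarantees there are enough "spare" vertices outside $L$ to host these paths without collisions. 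The condition $|G| \ge 19 = 7 + 12$ is exactly what is needed so that after reserving the (at most) $12$ vertices of $L$, at least $7$ vertices remain available as path interiors, matching the number of branch sets.

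The key steps, in order, are: (1) the intersection reduction $|L_i \cap L_j| \le 2$; (2) choose the distinguished clique $L_1$ and the two "seed" vertices $u \in L_2$, $w \in L_3$, arranging that $\{u,w\}$ together with four vertices of $L_2$ and four of $L_3$ are positioned so that every vertex of $L_1$ is hit; (3) invoke $7$-connectivity to obtain internally disjoint paths linking up the missing adjacencies, using $|G|\ge 19$ to keep these paths disjoint from each other and from $L \setminus (\text{their endpoints})$; (4) verify that the resulting seven branch sets are connected and pairwise adjacent, hence form a $K_7$ minor model. Several cases arise in step (2)--(3) according to how $L_2$ and $L_3$ meet $L_1$ and each other (e.g.\ whether $L_2 \cap L_3 = \emptyset$, whether some vertex lies in all three cliques is already excluded once $|L_i\cap L_j|\le 2$ and $|L|\ge 12$, etc.), and each case is handled by a slightly different assignment of the spare vertices.

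The main obstacle I expect is step (3): ensuring that the rerouting paths can be chosen simultaneously internally disjoint and avoiding the wrong clique vertices, in every case of the clique-intersection pattern. A naive application of Menger's theorem gives the paths one at a time, but their interiors may clash or may pass through vertices of $L$ that are already committed to other branch sets; handling this requires a careful global application of connectivity (a "linkage"-style argument, possibly splitting off vertices or using the fact that a $7$-connected graph on $\ge 19$ vertices has enough room) together with the bound $|G| \ge 19$ to guarantee that a valid simultaneous choice exists. This is the technical heart of the argument, and it is where the precise numerical hypotheses $7$-connected and $|G| \ge 19$ are used in an essential way.
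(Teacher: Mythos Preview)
The paper does not prove \cref{KNZ} at all: it is quoted as a known theorem of Kawarabayashi and Toft~\cite{KT05} (with a later generalization in~\cite{KLNZ05}) and is used as a black box in the proof of \cref{t:n8}. So there is no ``paper's own proof'' to compare against, and no proof is expected of you here.

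That said, your sketch has real gaps as a stand-alone argument. The reduction in step~(1) to $|L_i\cap L_j|\le 2$ is not a consequence of $|L|\ge 12$ alone: for instance $|L_1\cap L_2|=3$ and $L_3$ disjoint from $L_1\cup L_2$ gives $|L|=12$, so the ``counting argument'' fails, and the promised fallback of ``replace one clique by a better-positioned $5$-clique using $7$-connectivity'' is exactly the kind of step that is not routine and would need a genuine argument. Likewise, the numerology $|G|\ge 19=7+12$ is suggestive but does not by itself furnish the simultaneous disjoint-path system you need in step~(3); a naive Menger application gives fans one at a time, and preventing their interiors from colliding or from hitting committed clique vertices is precisely the nontrivial content of the theorem. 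The actual proof in~\cite{KT05} is considerably more involved than the outline you propose.
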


We next list some known results on contraction-critical graphs that we shall  use later on. 
\cref{l:alpha2} below  is  a  result    of  Dirac~\cite{Dirac60} who initiated the study of   contraction-critical graphs. 
  
\begin{lem}[Dirac~\cite{Dirac60}]\label{l:alpha2}  Let $G$ be a  $k$-contraction-critical graph. Then for each  $v\in V(G)$, \[\alpha(G[N(v)])\le d(v)-k+2.\] \end{lem}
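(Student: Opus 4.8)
The plan is to argue by contradiction, exploiting the defining property of contraction-criticality: every proper minor of $G$ is $(k-1)$-colorable, and I aim to ``repair'' such a coloring into a $(k-1)$-coloring of $G$ itself, contradicting $\chi(G)=k$. Before starting I would record the elementary fact that $G$ is $k$-vertex-critical — for any $w\in V(G)$, the subgraph $G\setminus w$ is a proper minor of $G$, hence $(k-1)$-colorable, and were $d(w)\le k-2$ this coloring would extend to $w$ — so in particular $\delta(G)\ge k-1$.

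Now suppose, for contradiction, that $\alpha(G[N(v)])\ge d(v)-k+3$, and fix an independent set $S\subseteq N(v)$ with $|S|=d(v)-k+3$; note $|S|\ge 2$ since $d(v)\ge k-1$. Form the graph $G'$ from $G$ by contracting the star with center $v$ and leaves $S$ (that is, the copy of $K_{1,|S|}$ on $\{v\}\cup S$ with edge set $\{vs : s\in S\}$) into a single vertex $w$, deleting any parallel edges that arise. Since $S\neq\emptyset$, $G'$ is a proper minor of $G$, so $\chi(G')\le k-1$; fix a proper $(k-1)$-coloring $c$ of $G'$ and pull it back along the contraction map $\phi$: color each $x\in V(G)\setminus\{v\}$ with $c(\phi(x))$, so every vertex of $S$ gets the color $c(w)$ and every other vertex of $G-v$ keeps its own $c$-color.

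The key point is that this is a proper coloring of $G-v$: for an edge $xy$ of $G-v$, if both ends lie outside $S$ then $xy$ is still an edge of $G'$; if exactly one end, say $x$, lies in $S$, then $y\neq v$ and $y\notin S$, so $wy$ is an edge of $G'$; and no edge of $G-v$ has both ends in $S$ because $S$ is independent — in all cases the two ends receive different colors. Finally, in $G$ the neighbors of $v$ are the $|S|$ vertices of $S$, all colored $c(w)$, together with the $d(v)-|S|=k-3$ vertices of $N(v)\setminus S$; hence $N(v)$ uses at most $1+(k-3)=k-2$ colors, leaving some color of $[k-1]$ free to assign to $v$. This produces a proper $(k-1)$-coloring of $G$, the desired contradiction. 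The one step that needs genuine care is the verification that the pulled-back coloring is proper on $G-v$, i.e. that contracting the star does not identify the two ends of any edge of $G-v$; this is precisely where independence of $S$ enters. Everything else — that $G'$ is a proper minor (so criticality applies) and the arithmetic $d(v)-|S|=k-3$ — is routine.
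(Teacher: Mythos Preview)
Your proof is correct and matches the paper's approach: the paper only sketches the argument in a single sentence, saying that one contracts $v$ together with a maximum independent set of $G[N(v)]$ to a single vertex and uses that the resulting graph is $(k-1)$-colorable. Your write-up fills in precisely those details.
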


A proof of \cref{l:alpha2} can be easily obtained by contracting $v$ and a maximum  independent set of $G[N(v)]$  to a single vertex and then applying the fact that  the resulting graph is $(k-1)$-colorable.  
 Lemma~\ref{l:wonderful}   is a result of the first and second authors~\cite{RolekSong17a}, which  turns out to be very powerful because   the existence of pairwise vertex-disjoint paths  is guaranteed without using the connectivity of such  graphs.   If two vertices  $u,v$ in a graph $G$  are not adjacent,  then $uv$  is  a \dfn{missing edge} of $G$. \medskip

 \begin{lem}[Rolek and Song~\cite{RolekSong17a}]\label{l:wonderful} 
Let $G$ be any $k$-contraction-critical graph. Let $x\in V(G)$ be a vertex of
     degree $k + s$ with $\alpha(G[N(x)]) = s + 2$ and let $S \subset N(x)$ with
    $ |S| = s + 2$ be any independent set, where $k \ge 4$ and $s \ge 0$ are integers.
     Let $M$ be a set of missing edges of $G[N(x)\less S]$.  Then there
     exists a collection $\{P_{uv}:uv\in M\} $ of paths in $G$ such that
     for each $uv\in M$, $P_{uv}$ has ends $\{u, v\}$ and all its internal vertices
     in $G\less N[x]$. Moreover,  if vertices $u,v,w,z$ with $uv,wz\in M$ are distinct, then
     the paths $P_{uv}$ and $P_{wz}$ are vertex-disjoint.
 \end{lem}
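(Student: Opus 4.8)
The plan is to run a single generalized Kempe-chain argument relative to one carefully chosen auxiliary coloring of $G-x$, obtained by contracting the star from $x$ to $S$.

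First I would form the minor $G' = G/F$, where $F$ is the edge set of the star from $x$ to $S$. Since $|S| = s+2 \ge 2$, the graph $G'$ is a proper minor of $G$, so by $k$-contraction-criticality $G'$ admits a proper $(k-1)$-coloring $c$; after permuting colors I may assume the contracted vertex $x^{\ast}$ receives color $k-1$. Pull $c$ back to $V(G)$ by assigning every vertex of $S\cup\{x\}$ the color $k-1$ and every other vertex its $c$-color, and call the result $\tilde c$. Then $\tilde c$ is a proper $(k-1)$-coloring of $G-x$: the only monochromatic edges it creates are the edges from $x$ to $S$, since $S$ is independent and, in $G'$, the vertex $x^{\ast}$ is adjacent to every $G$-neighbour of $S$ (in particular to every vertex of $N(x)\setminus S$).

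Next comes a color count. Because $G$ is not $(k-1)$-colorable, in the proper $(k-1)$-coloring $\tilde c$ of $G-x$ the set $N(x)$ must realize all $k-1$ colors. All of $S$ is colored $k-1$, while every vertex of $N(x)\setminus S$ avoids $k-1$ (again because it is adjacent to $x^{\ast}$ in $G'$), and $|N(x)\setminus S| = (k+s)-(s+2) = k-2$. Hence $N(x)\setminus S$ uses each of the colors $1,\dots,k-2$ exactly once; write $N(x)\setminus S = \{v_1,\dots,v_{k-2}\}$ with $\tilde c(v_\ell)=\ell$. Now fix a missing edge $uv\in M$, say $u=v_i$, $v=v_j$ with $i\ne j$, and let $K$ be the component containing $v_i$ of the subgraph $H_{ij}$ of $G-x$ induced by the vertices colored $i$ or $j$. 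I claim $v_j\in K$: otherwise, swapping colors $i$ and $j$ on $K$ produces another proper $(k-1)$-coloring of $G-x$ in which $v_i$ has color $j$ while $v_j$, all other $v_\ell$, and all of $S$ keep their colors, so $N(x)$ now misses color $i$ and we may extend the coloring to $x$ with color $i$, contradicting that $G$ is not $(k-1)$-colorable. Thus there is a $v_i$--$v_j$ path in $H_{ij}\subseteq G-x$, which we take to be $P_{uv}$. Every internal vertex of $P_{uv}$ is colored $i$ or $j$; it is not $x$, it is not in $S$ (color $k-1$), and the only vertices of $N(x)\setminus S$ with color $i$ or $j$ are $v_i$ and $v_j$, which are the endpoints; hence all internal vertices lie in $G\setminus N[x]$, as required.

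Finally, since all the $P_{uv}$ are extracted from the single coloring $\tilde c$, the disjointness clause is automatic: if $u,v,w,z$ are distinct with $uv,wz\in M$, then, $\tilde c$ being injective on $N(x)\setminus S$, we have $\{\tilde c(u),\tilde c(v)\}\cap\{\tilde c(w),\tilde c(z)\}=\emptyset$, and $P_{uv}$, $P_{wz}$ live in the vertex sets of these two disjoint pairs of color classes. The one genuinely delicate point is the Kempe swap in the previous step — verifying that after recoloring $K$ the neighbourhood $N(x)$ really does lose color $i$, which relies on the precise color count and on $v_\ell\notin H_{ij}$ for $\ell\ne i,j$ — and this is exactly where non-$(k-1)$-colorability of $G$ (i.e. contraction-criticality) is invoked; everything else is bookkeeping.
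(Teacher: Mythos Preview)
Your proof is correct and follows exactly the standard generalized Kempe-chain argument that underlies this lemma: contract $\{x\}\cup S$ to a single vertex, pull back a $(k-1)$-coloring of the resulting proper minor, observe that the $k-2$ vertices of $N(x)\setminus S$ receive pairwise distinct colors from $\{1,\dots,k-2\}$, and then read off each $P_{uv}$ as a bichromatic Kempe path. The paper itself does not prove this lemma but merely quotes it from~\cite{RolekSong17a}; your argument is precisely the one given there, so there is nothing further to compare.
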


 We also need a deep result of Mader~\cite{7con} on the connectivity of $8$-contraction-critical graphs. 
\begin{thm}[Mader~\cite{7con}]\label{mader} 
For all $k \ge 7$, every $k$-contraction-critical graph is $7$-connected.
\end{thm}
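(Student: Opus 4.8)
The final statement to be proved is Mader's theorem (\cref{mader}): for all $k\ge 7$, every $k$-contraction-critical graph is $7$-connected. Since this is quoted in the excerpt as an external ingredient due to Mader~\cite{7con}, I will sketch how one proves a statement of this type directly, following the standard strategy for connectivity lower bounds on contraction-critical graphs: induction on separation size via the behaviour of small separators, combined with the classical fact that $k$-contraction-critical graphs are $(k-1)$-edge-connected and have minimum degree at least $k-1$ (indeed, the neighbourhood of any vertex is ``dense'' by \cref{l:alpha2} applied with small $\alpha$ bounds). The base cases (that a $k$-contraction-critical graph is $c$-connected for small $c$) are elementary: it is connected because colouring components independently would give a $(k-1)$-colouring; and it is $2$-connected and in fact $3$-connected by Dirac-type arguments, since a clique cutset of size $<k-1$ can be used to merge $(k-1)$-colourings of the blocks by permuting colour classes.

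First I would fix a $k$-contraction-critical graph $G$ with $k\ge 7$ and suppose for contradiction that $G$ has a separation $(A,B)$ of order $t\le 6$, chosen with $t$ minimum; write $C=A\cap B$, so $|C|=t$ and both $A\setminus C,\ B\setminus C$ are nonempty. The minimality of $t$ forces $G$ to be $t$-connected, and standard arguments show $C$ can be taken to induce a subgraph with no ``colouring obstruction,'' i.e.\ one may assume $C$ is not a clique (if $C$ were a clique one would $(k-1)$-colour $G[A]$ and $G[B]$ — both proper minors of $G$, hence $(k-1)$-colourable — and permute colours on $B$ so the two colourings agree on the clique $C$, a contradiction). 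So $C$ contains a missing edge $uv$. Next I would use criticality: because $G$ is $k$-contraction-critical, contracting any edge, or more generally any connected subgraph, yields a $(k-1)$-colourable graph; in particular for the missing edge $uv$ of $C$ one can find in the ``$B$-side'' a connected subgraph $H\subseteq G[B]$ with $u,v\in V(H)$ whose contraction is forced by minimality of the separation (essentially because $B\setminus C$ together with $C$ must be ``$(k-1)$-inseparable'' in the relevant sense — otherwise a smaller separation appears). Contracting $H$ turns $A$ into a proper minor of $G$ in which $u$ and $v$ have been identified; iterating over all missing edges of $C$ (using disjointness of the contracted pieces, guaranteed because the separator is small and $G$ is highly connected — here is where a Menger/linkage step enters) one produces a minor of $G$ on vertex set roughly $A$ in which $C$ has become a clique, and this minor is a proper minor of $G$, hence $(k-1)$-colourable; symmetrically for $B$; then gluing along the now-clique $C$ gives a $(k-1)$-colouring of $G$, the desired contradiction.

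The genuinely delicate points — and the main obstacle — are twofold. First, one must justify that the missing edges of $C$ can be ``completed'' through the $B$-side (and simultaneously through the $A$-side) by \emph{vertex-disjoint} connected subgraphs; this is exactly the kind of linkage statement that \cref{l:wonderful} packages cleanly in a different context, but in Mader's original proof it requires a careful extremal/counting argument using the edge-connectivity of $G$ and the minimality of the separation, and it is precisely where $t\le 6$ (as opposed to larger $t$) is used — for $t\ge 7$ the packing can fail, which is why the theorem stops at $7$-connectivity. Second, one must handle the degenerate configurations where $|A\setminus C|$ or $|B\setminus C|$ is very small (e.g.\ a single vertex, or the side is a clique plus a vertex): these are treated separately using \cref{l:alpha2} and the minimum-degree bound $\delta(G)\ge k-1\ge 6$, which rules out a side that is ``too small'' to absorb a contraction. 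I expect the bulk of the work to be in this disjoint-completion lemma; once it is in hand, the colour-merging across the clexified separator $C$ is routine. A fully self-contained write-up would reconstruct Mader's argument in~\cite{7con}; for the purposes of this paper we simply invoke it as stated.
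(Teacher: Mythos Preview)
The paper does not prove \cref{mader}; it is quoted verbatim as an external theorem of Mader~\cite{7con} and used as a black box. You correctly identify this and ultimately do the same (``for the purposes of this paper we simply invoke it as stated''), so on the level of what the paper actually contains, your proposal matches: there is nothing to compare against.

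As for your heuristic sketch of how one \emph{might} prove such a statement: the broad outline --- take a minimum separation $(A,B)$ with cut $C$ of size $t\le 6$, rule out $C$ being a clique by colour-merging, then ``cliquify'' $C$ by contracting connected pieces on each side and merge $(k-1)$-colourings --- is indeed the standard template going back to Dirac and Mader. However, the step you flag as ``the genuinely delicate point'' is more delicate than your sketch suggests. It is not the case that one can always realise all missing edges of $C$ simultaneously by vertex-disjoint connected subgraphs on each side purely from $t\le 6$ and high edge-connectivity; Mader's argument does not proceed by a single clean linkage lemma of that form. Rather, his proof analyses the structure of the two sides in considerable detail (using, among other things, that each side together with $C$ must itself fail to be $(k-1)$-colourable after suitable contractions, and repeatedly exploiting minimality of the counterexample), and the case analysis for $t=5,6$ is substantial. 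So while your plan names the right obstacle, the mechanism you propose for overcoming it (a Menger/\cref{l:wonderful}-style disjoint-paths argument) is not how the actual proof goes and would not by itself close the gap. Since the present paper only needs the statement, this does not affect anything downstream.
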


Finally, we shall make use of a result on rooted $K_4$ minors. 
 Let $v_1, v_2, v_3, v_4$ be four distinct vertices in a graph $G$.  We say that $G$ contains a \dfn{$K_4$  minor rooted at $v_1, v_2, v_3, v_4$} if there exist $V_1, V_2, V_3, V_4\subseteq V(G)$ such that $ v_i\in V_i$ and $G[V_i]$ is connected for each $i\in[4]$, and for $1\le i<j\le 4$, $V_i$ and $V_j$ are disjoint and there is an edge between $V_i$ and $V_j$ in $G$.  A partial answer to the next theorem was first given by Robertson, Seymour and Thomas~\cite{RST}, and later Fabila-Monroy and Wood~\cite{rootedK4}  
gave a complete characterization on graphs containing a rooted $K_4$-minor. 

\begin{thm}[\cite{rootedK4, RST}] \label{t:rootedK4}
Let $G$ be a $4$-connected graph and let  $v_1, v_2, v_3, v_4\in V(G)$ be any four distinct vertices. Then  either $G$ contains a $K_4$  minor rooted at  $v_1, v_2, v_3, v_4$, or
 $G$ is planar and $v_1, v_2, v_3, v_4$ are on a common face.
\end{thm}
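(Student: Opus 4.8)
I would first dispose of the easy direction. Suppose $G$ is planar and $v_1,v_2,v_3,v_4$ lie on a common face $F$ of some planar embedding of $G$. Form $G'$ from $G$ by adding a new vertex $w$ inside $F$ adjacent to $v_1,v_2,v_3,v_4$; then $G'$ is planar. If $G$ had a $K_4$ minor rooted at $v_1,v_2,v_3,v_4$ with branch sets $V_1,V_2,V_3,V_4$, then $V_1,V_2,V_3,V_4,\{w\}$ would be the branch sets of a $K_5$ minor in $G'$ (each $V_i$ contains $v_i\in N(w)$), contradicting planarity. So a planar graph with $v_1,v_2,v_3,v_4$ cofacial has no such rooted minor, and it remains to prove the converse: \emph{if $G$ is $4$-connected and has no $K_4$ minor rooted at $v_1,v_2,v_3,v_4$, then $G$ is planar with $v_1,v_2,v_3,v_4$ on a common face.}

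For this I would again pass to $G'=G+w$ with $w$ adjacent exactly to $v_1,v_2,v_3,v_4$, and study it. First one checks $G'$ is still $4$-connected: a cut $S$ of $G'$ with $|S|\le 3$ either contains $w$, so $S\setminus\{w\}$ is a cut of $G$ of size $\le 2$, or misses $w$, so (since $w$ keeps a neighbour outside $S$) $S$ is already a cut of $G$ of size $\le 3$; either way $G$ fails to be $4$-connected. If $G'$ is planar, then $G=G'-w$ is planar with $v_1,v_2,v_3,v_4$ on the boundary of the face that contained $w$, which is exactly the conclusion. If $G'$ is not planar, then, being $4$-connected, it has a $K_5$ minor: by Wagner's structure theorem every graph with no $K_5$ minor is built by clique-sums over vertex sets of size at most $3$ from planar graphs and the Wagner graph $V_8$, and a $4$-connected graph in this family cannot be a nontrivial clique-sum (the separating clique would be a $\le 3$-cut) and is not $V_8$ (which is only $3$-connected), hence is planar.

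It then suffices to prove the transfer lemma \emph{``$G'$ has a $K_5$ minor $\Rightarrow$ $G$ has a $K_4$ minor rooted at $v_1,v_2,v_3,v_4$.''} I would fix a $K_5$ minor of $G'$ with branch sets $B_1,\dots,B_5$ minimizing $\sum_i|B_i|$, relabelled so that $w\notin B_1\cup\cdots\cup B_5$ or $w\in B_5$. If $B_5=\{w\}$, then each of $B_1,\dots,B_4$ meets $N(w)=\{v_1,v_2,v_3,v_4\}$, and since these four sets are disjoint they contain $v_1,v_2,v_3,v_4$ in some order; as they are pairwise adjacent, they already form a rooted $K_4$ minor. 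When $w\in B_5$ with $|B_5|\ge 2$, a case analysis on the minimum minor — using that deleting $w$ breaks $B_5$ into components each containing some $v_i$, since $w$ has no other neighbours — should either reduce to the previous case or directly produce the rooted minor. When $w$ is unused, $B_1,\dots,B_5$ form a $K_5$ minor of $G$ itself; by Menger's theorem and $4$-connectivity there are four disjoint paths $P_j$ ($j\in[4]$), $P_j$ joining $v_j$ to $U:=B_1\cup\cdots\cup B_5$ and otherwise avoiding $U\cup\{v_1,v_2,v_3,v_4\}$, and if the $P_j$ can be routed into four distinct branch sets $B_{i_1},\dots,B_{i_4}$, then $B_{i_1}\cup P_1,\dots,B_{i_4}\cup P_4$ are disjoint, connected, pairwise adjacent, and contain $v_1,v_2,v_3,v_4$, giving the rooted $K_4$ minor.

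\textbf{Main obstacle.} The heart of the matter is forcing those four routing paths into four distinct branch sets — equivalently, ruling out a set $Z$, $|Z|\le 3$, separating $\{v_1,v_2,v_3,v_4\}$ from three of the five branch sets. Since any two branch sets together induce a connected subgraph on at least two vertices, such a $Z$ would contradict $4$-connectivity \emph{unless} it swallows whole small branch sets, and excluding that is precisely where the minimality of the chosen $K_5$ minor and a careful look at branch sets of size one or two enter; the case $w\in B_5$ with $|B_5|\ge 2$ requires similar bookkeeping. This is exactly the technical core of Fabila-Monroy and Wood~\cite{rootedK4}. An alternative, apex-free route is to take a counterexample $G$ minimizing $|V(G)|+|E(G)|$, show that deleting or contracting any edge disjoint from $\{v_1,v_2,v_3,v_4\}$ either destroys $4$-connectivity or creates the rooted minor, deduce strong structure (a low-degree vertex via Halin's theorem on minimally $4$-connected graphs, and no nontrivial small separations of certain types), and reduce to a short list of base graphs, each planar with $v_1,v_2,v_3,v_4$ cofacial; tracking how the four terminals sit relative to small separations is the recurring difficulty there, as in Robertson, Seymour and Thomas~\cite{RST}.
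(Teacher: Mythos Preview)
The paper does not prove \cref{t:rootedK4}; it is quoted as a black box from \cite{rootedK4,RST}, so there is no in-paper argument to compare against. What I can assess is whether your outline would stand on its own.

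Your reduction is correct and standard: adding the apex $w$ adjacent to $v_1,\dots,v_4$, checking that $G'$ remains $4$-connected, and invoking Wagner's structure theorem to conclude that either $G'$ is planar (whence $G$ is planar with the $v_i$ cofacial) or $G'$ has a $K_5$ minor, is exactly the right framework. The easy direction via a $K_5$ minor in the apex extension is also clean.

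Where the proposal is not yet a proof is precisely where you say so. In the transfer step you handle $B_5=\{w\}$ completely, but the case $w\in B_5$ with $|B_5|\ge 2$ is only gestured at, and the case $w\notin\bigcup B_i$ hinges on routing four disjoint $(v_i,U)$-paths into four \emph{distinct} branch sets. Your suggested obstruction analysis (a set $Z$ with $|Z|\le 3$ separating $\{v_1,\dots,v_4\}$ from three branch sets) is the right instinct, but the sentence ``any two branch sets together induce a connected subgraph on at least two vertices, so such a $Z$ would contradict $4$-connectivity unless it swallows whole small branch sets'' is not an argument: nothing yet prevents, say, all four paths landing in $B_1\cup B_2$ while $B_3,B_4,B_5$ sit behind a $3$-cut contained in $B_1\cup B_2$. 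Minimality of the $K_5$ model helps (it forces each $B_i$ to be a tree with leaves only at attachment points), but turning that into a clean contradiction still requires the case analysis you defer to \cite{rootedK4}. So as written this is a sound proof \emph{plan} with an honestly flagged gap, not a proof; to finish it you would need either to carry out that routing argument in full or to quote the Fabila-Monroy--Wood characterization directly, which is what the present paper does.
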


 This paper is organized as follows. In the next section,  we prove \cref{t:main}(i, ii). In Section~\ref{s:deg9}, we prove \cref{t:main}(iii).\medskip 
 
 We need to introduce   more notation.  Let $G$ be a graph.  The \dfn{complement} of $G$ is denoted by $\overline{G}$. 
 If $x,y$ are adjacent
vertices of a graph $G$, then we denote by $G/xy$ (or simply $G/e$ if $e=xy$) the graph obtained from $G$
by contracting the edge $xy$ and deleting all resulting parallel
edges.   If $u,v$ are distinct nonadjacent vertices of a graph $G$, then by
$G+uv$ we denote the graph obtained from $G$ by adding an edge
with ends $u$ and $v$.  If $u,v$ are adjacent or equal, then we define
$G+uv$ to be $G$.  Similarly, if  $M \subseteq E(G)\cup E(\overline{G})$, then   by
$G+M$ we denote the graph obtained from $G$ by adding  all the edges of $M$ to $G$.  If  $A, B\subseteq V(G)$ are disjoint, we say that $A$ is \emph{complete} to $B$ if each vertex in $A$ is adjacent to all vertices in $B$, and $A$ is \dfn{anti-complete} to $B$ if no vertex in $A$ is adjacent to any vertex in $B$.
If $A=\{a\}$, we simply say $a$ is complete to $B$ or $a$ is anti-complete to $B$.  
  We denote by $B \less A$ the set $B - A$,   and $G \less A$ the subgraph of $G$ induced on $V(G) \less A$, respectively. 
If $A = \{a\}$, we simply write $B \less a$    and $G \less a$, respectively.  An $(A, B)$-path in $G$ is a path $P$ with one end in $A$ and  the other in $B$  such that  no internal vertex of $P$ belongs to $A\cup B$; we simply say $(a, B)$-path  if $A=\{a\}$.  We use $e(A, B)$ to denote the number of edges in $G$ with one end in $A$ and the other in $B$. 
   We say  that $G$ is \emph{$H$-free} for some graph $H$ if it has no subgraph isomorphic to $H$.    The \dfn{join} $G+H$ (resp. \dfn{union} $G\cup H$) of two 
vertex-disjoint graphs
$G$ and $H$ is the graph having vertex set $V(G)\cup V(H)$  and edge set $E(G)
\cup E(H)\cup \{xy\, |\,  x\in V(G),  y\in V(H)\}$ (resp. $E(G)\cup E(H)$).  
We use the convention   ``A :="  to mean that $A$ is defined to be
the right-hand side of the relation.   \medskip

 \section{Number of $8$-vertices}\label{s:deg8}

We begin this section with a lemma. 
\begin{figure}[htbp]
\centering
\includegraphics[scale=0.4]{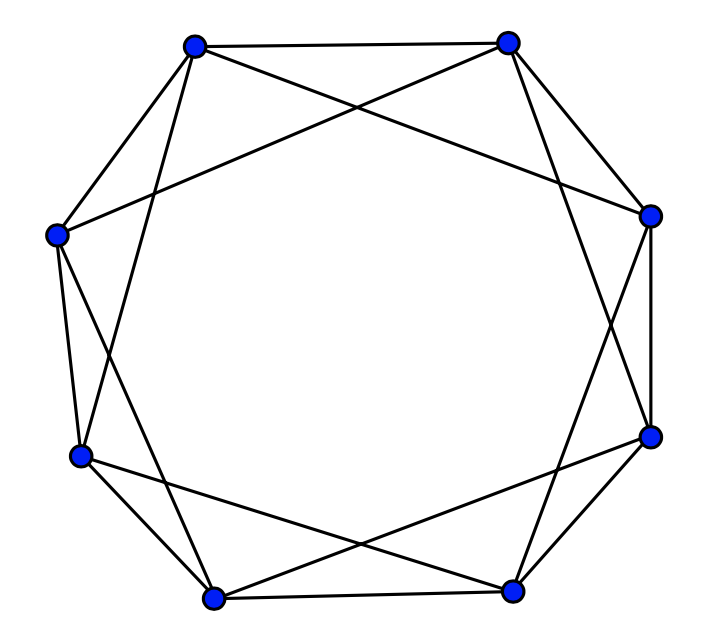}
\caption{The graph $H_8$.}
\label{H8}
\end{figure}

\begin{lem}\label{l:H8}  Let $H$ be a graph with $|H|=8$ and $\alpha(H)=2$. Then $H$  contains   $K_4$  or $H_8$ as a subgraph, where $H_8$  is depicted in Figure~\ref{H8}. 
\end{lem}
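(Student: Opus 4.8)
The plan is to work in the complement. Put $F:=\overline{H}$; since $\alpha(H)=2$, $F$ is triangle-free, and it has $8$ vertices. Under complementation, ``$H$ contains $K_4$'' translates to ``$\alpha(F)\ge 4$'', and ``$H$ contains $H_8$ as a subgraph'' translates (both graphs having $8$ vertices, so the subgraph is spanning) to ``$F$ is isomorphic to a subgraph of $\overline{H_8}$''. So the task becomes: show that every triangle-free graph $F$ on $8$ vertices either has $\alpha(F)\ge 4$ or embeds into $\overline{H_8}$.

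Assume $\alpha(F)\le 3$. Since $R(3,3)=6$, the subgraph of $F$ induced on any six of its vertices contains an independent set of size $3$, so in fact $\alpha(F)=3$. From this I read off three facts: (a) every neighbourhood $N_F(v)$ is independent, hence $|N_F(v)|\le\alpha(F)=3$ and $\Delta(F)\le 3$; (b) $F$ is not bipartite, because a bipartite graph on $8$ vertices has a colour class (hence an independent set) of size at least $4$; (c) the shortest odd cycle of $F$ is induced, since a chord would split an odd cycle into two shorter cycles of opposite parity, producing a triangle or a shorter odd cycle. By (b) and triangle-freeness that shortest odd cycle has length $5$ or $7$. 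It cannot have length $7$: if $F$ contains an induced $C_7$, let $u$ be the eighth vertex; then $N_F(u)$ is an independent subset of the $C_7$ of size at most $3$, and deleting any independent set of size at most $3$ from $C_7$ leaves a graph with an independent set $T$ of size $3$, whereupon $T\cup\{u\}$ is an independent set of size $4$ in $F$ --- a contradiction. Hence $F$ contains an induced $C_5$; write $C=v_1v_2v_3v_4v_5$ for it and $W=\{a,b,c\}$ for the other three vertices.

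Now the configuration is rigid. Each $v_i$ has two neighbours on $C$ and $\Delta(F)\le 3$, so at most one neighbour in $W$; as $C$ is induced there are no other edges inside $C$; and $F[W]$, triangle-free on three vertices, is $\overline{K_3}$, $K_2\cup K_1$, or $P_3$. Using $\alpha(F)=3$: for every diagonal pair $\{v_i,v_{i+2}\}$ and every non-edge of $F[W]$ there must be an edge between them (otherwise those four vertices form an independent set), and when $F[W]=\overline{K_3}$ every $v_i$ must have a neighbour in $W$. One then runs through the few possibilities for $F[W]$ together with the at most five edges between $C$ and $W$: each such choice determines $F$ completely, and in every case surviving the $\alpha(F)=3$ restrictions one checks that $F$ is isomorphic to a subgraph of $\overline{H_8}$ --- equivalently, that $H$ contains $H_8$. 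The main obstacle is precisely this final bookkeeping: keeping the bound $\Delta(F)\le 3$ and the absence of an independent $4$-set satisfied simultaneously while eliminating the spurious branches, and then exhibiting the isomorphism onto a subgraph of $\overline{H_8}$ in each remaining one; everything preceding it is a routine deduction from $\alpha(F)=3$ and $\Delta(F)\le 3$.
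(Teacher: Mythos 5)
Your set-up is correct and takes a genuinely different route from the paper's. You pass to the complement $F=\overline{H}$ from the start, deduce $\Delta(F)\le 3$ and $\alpha(F)=3$, and locate an induced $C_5$ by ruling out bipartiteness and a chordless $C_7$ (your arc-decomposition argument for excluding $C_7$ is fine, since the at most three deleted vertices form an independent set of the cycle). The paper instead takes an edge-minimal counterexample $H$, shows it must be $4$-regular, and only then passes to the complement, where a $3$-regular triangle-free graph on $8$ vertices with $\alpha=3$ is pinned down in a few lines; that minimality step is precisely what collapses the case analysis that you are left facing.

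The gap is that your decisive step is announced rather than carried out: you never actually perform the classification of $F$ given the induced $C_5$, nor exhibit the embeddings into $\overline{H_8}$. This is not a formality. One must still rule out $F[W]=\overline{K_3}$ (there the degree bound forces the two $C$-neighbours of a degree-$2$ vertex of $W$ to be a diagonal pair, which then combines with the other two $W$-vertices into an independent $4$-set), work through the possible degree distributions of the at most five $C$--$W$ edges in the $K_2\cup K_1$ and $P_3$ cases, and then match each surviving configuration against $\overline{H_8}$ --- which requires knowing $\overline{H_8}$ explicitly (it is the Wagner graph $C_8(1,4)$). Note also that more than one configuration survives all your constraints, since $\overline{H_8}$ with an edge deleted is still triangle-free with independence number $3$; so several non-isomorphic graphs $F$ must each be shown to embed. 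Until that enumeration is written down and each branch is either killed or embedded, the lemma is not proved. The approach would succeed, but as it stands the proof stops exactly where the content begins.
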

\begin{proof}   Suppose $H$ is $K_4$-free.  We   show that $H$ contains $H_8$ as a subgraph. We may assume that $H$ is edge-minimal subject to being $K_4$-free and $\alpha(H)=2$.   Let $u\in V(H)$.   Since $\alpha(H)=2$, we see that   $V(H)\less N[u]$ is a clique. Thus  $|H\backslash N[u]|\le3$ because $H$ is $K_4$-free.  Hence $d(u) \ge 4$.  On the other hand, since $\alpha(H)=2$  and $G[N(u)]$ is $K_3$-free, we see that   $d(u)\le5$ because the Ramsey number $R(3,3)=6$.   Thus\medskip

\noindent ($*$)  for each $u\in V(H)$,  $4\le d(u)\le5$ and  $G[N(u)]$ is  $K_3$-free and   $\overline{K_3}$-free.\medskip

 We next prove that  $H$ is $4$-regular. Suppose not. By ($*$), let   $x\in V(H)$ be a $5$-vertex in  $H$;   so    $G[N(x)]$ is isomorphic to  $C_5$. Let $\{y_1, y_2\}:=V(H)\less N[x]$.  Then $y_1y_2\in E(H)$; in addition,  $y_1$ and  $y_2$ have a common neighbor, say $w$, in  $N(x)$.  Then $d(w)=5$.  It follows  that   $H\less  xw$ is $K_4$-free and $\alpha(H\less xw)=2$, contrary to the minimality of $e(H)$.    Thus    $H$ is $4$-regular.   Then $\overline{H}$ is $3$-regular on $8$ vertices. Note  that $\overline{H}$  is $K_3$-free and $\alpha(\overline{H})=3$.   Let $w\in V(\overline{H})$ and   $N_{\overline{H}}(w):= \{w_1, w_2, w_3\}$.  Let $X  := \{x_1, x_2, x_3, x_4\}$ be the remaining vertices of $\overline{H}$.  Since $\overline{H}$ is $K_3$-free, we see that $N_{\overline{H}}(w)$ is an independent  set. This implies that $e(\{w_1, w_2, w_3\}, X)=6$ and so $e(\overline{H}[X])=3$ and $\alpha(\overline{H}[X])=2$. It follows that   $\overline{H}[X]=P_4$.  We may assume that $x_1, x_2, x_3, x_4$ are the vertices of $\overline{H}[X]$ in order. Then each of $x_1$ and $x_4$ has two neighbors in $N_{\overline{H}}(w)$. We may assume that $w_2$ is a common neighbor of $x_1$ and $x_4$. Since $\overline{H}$ is triangle-free, by symmetry, we may assume that $w_1$ is adjacent to $x_1$ and  $x_3$. Then $w_3$ must be adjacent to $x_2$ and $x_4$. One can easily check that $\overline{H}$ is isomorphic to  $\overline{H_8}$, and so $H$ is isomorphic to $H_8$, as desired.     \end{proof}

\begin{lem}\label{l:deg8} Let  $G$ be  an $8$-contraction-critical  graph with no $K_7$ minor. Then the following hold. 
\begin{enumerate}[(a)]
\item $8\le \delta(G)\le 9$. 
\item  $2n_8+n_9\ge30$. 
\item  For every subgraph $H$ of $G$ with $|H|\le7$, $H$ has no  $K_6$ minor.
\item  For every $8$-vertex  $v\in V(G)$,   $G[N(v)]$ has  two  disjoint $4$-cliques.  
\end{enumerate}
\end{lem}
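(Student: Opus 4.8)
\emph{Overview and parts (a), (b).} The unifying idea is to exhibit a $K_7$ minor (or a $K_7$ subgraph) whenever a stated conclusion fails; I will use that $G\not\se K_7$ forces $e(G)\le 5|G|-15$ by \cref{t:exfun} with $p=7$, that $G$ is $7$-connected by \cref{mader}, and that $G$ is vertex-critical, so $\delta(G)\ge 7$. For the lower bound in (a): if $v$ were a $7$-vertex, \cref{l:alpha2} gives $\alpha(G[N(v)])\le d(v)-8+2=1$, so $N[v]$ induces $K_8$, a contradiction. For the upper bound: if $\delta(G)\ge 10$ then $e(G)\ge 5|G|>5|G|-15$, again a contradiction. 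For (b), I combine $\delta(G)\ge 8$ from (a), which gives $2e(G)=\sum_u d(u)\ge 8n_8+9n_9+10(|G|-n_8-n_9)=10|G|-2n_8-n_9$, with $e(G)\le 5|G|-15$, to get $2n_8+n_9\ge 30$.

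\emph{Part (c).} Suppose $H\subseteq G$ with $|V(H)|\le 7$ has a $K_6$ minor with branch sets $B_1,\dots,B_6$; put $Y:=B_1\cup\cdots\cup B_6$, so $|Y|\le 7$, and $Z:=V(G)\less Y$, which is nonempty since $|G|\ge 8$. It suffices to produce a connected $W\subseteq Z$ with every vertex of $Y$ having a neighbour in $W$, because then $B_1,\dots,B_6,W$ is a $K_7$-model. If $G-Y$ is connected, take $W=Z$: each $y\in Y$ has at most $|Y|-1\le 6<\delta(G)$ neighbours inside $Y$, hence a neighbour in $Z$. If $G-Y$ is disconnected, take $W$ to be one of its components; then $N_G(W)\subseteq Y$ and $N_G(W)$ separates $W$ from the remaining, nonempty part of $G-Y$, so $|N_G(W)|\ge 7\ge |Y|$ by \cref{mader}, forcing $N_G(W)=Y$.

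\emph{Part (d).} Let $v$ be an $8$-vertex and $H:=G[N(v)]$, so $|H|=8$. \cref{l:alpha2} gives $\alpha(H)\le 2$, and $\alpha(H)=1$ is impossible (then $N[v]$ induces $K_9$), so $\alpha(H)=2$; moreover $H$ is $K_5$-free, since a $K_5$ in $H$ together with $v$ is a $K_6$ on $6\le 7$ vertices, contradicting (c). Assume for contradiction that $H$ has no two disjoint $K_4$'s; I will build a $K_7$ minor. By \cref{l:H8}, either $H$ is $K_4$-free, so $H$ contains $H_8$ as a spanning subgraph, or $H$ has a $K_4$ on some $Q$ while $R:=N(v)\less Q$ (four vertices) is not a clique though $\alpha(G[R])\le 2$. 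In either case I would apply \cref{l:wonderful} with $x=v$ and $s=0$ (valid since $d(v)=8=8+0$ and $\alpha(H)=2=0+2$): for a carefully chosen independent pair $S=\{a,b\}\subseteq N(v)$ and a carefully chosen set $M$ of missing edges of $G[N(v)\less S]$ it yields internally disjoint paths through $G\less N[v]$ joining the ends of the edges in $M$, pairwise disjoint when the corresponding edges of $M$ share no vertex. Taking $\{v\}$ together with the six vertices of $N(v)\less S$, each enlarged by the interiors of the paths it must absorb, should produce six pairwise-adjacent branch sets forming, with $\{v\}$, a $K_7$-model. When $H$ has a $K_4$, the structure of $\overline{G[R]}$ (a nonempty triangle-free graph on four vertices) together with the constraint from (c) that no six-subset of $N(v)$ carries a $K_5$ minor should let one choose $S$ so that $\overline{G[N(v)\less S]}$ is a matching, in which case the paths are automatically pairwise disjoint and the model is immediate; a genuine four-terminal linkage through $G\less N[v]$, if one is needed, is supplied by \cref{t:rootedK4} applied inside $G\less N[v]$.

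\emph{Main obstacle.} The $K_4$-free case of (d). There $G[N(v)\less S]$ consists of six vertices of $H_8$, hence is $K_6$ minus at least seven edges---far too many to form a matching---so the paths from \cref{l:wonderful} overlap and the naive model fails. One must instead exploit the rigid structure of $H_8$ (whose complement is essentially the Wagner graph) to decide which missing edges to realize and to let certain branch sets absorb several path interiors simultaneously, presumably through a short case analysis on how $H$ lies above $H_8$.
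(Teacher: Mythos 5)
Parts (a) and (b) are correct and essentially identical to the paper's argument. Part (c) is also correct but takes a different route: the paper fixes a vertex $x\notin V(H)$ and uses $7$-connectivity plus Menger to get a fan of $|H|$ internally disjoint $(x,V(H))$-paths, contracting each onto $V(H)$ so that $x$ becomes complete to $V(H)$; you instead build a connected set $W$ in $G\less Y$ dominating $Y$, using $\delta(G)\ge 8$ when $G\less Y$ is connected and $7$-connectivity when it is not. Both are valid, and yours avoids Menger at the cost of the small case split.

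Part (d), however, is not a proof: it is a correct identification of the tools together with an explicit admission that the key constructions are missing. In the $K_4$-free case you flag the difficulty yourself and leave it unresolved. The resolution in the paper is that one does \emph{not} try to complete $G[N(v)\less S]$ using paths alone: with $S=\{w_1,w_4\}$ one takes $M=\{w_2w_7,w_3w_6,w_5w_8\}$, a \emph{matching} of missing edges (so \cref{l:wonderful} gives three pairwise disjoint paths), and additionally contracts the two edges $w_1w_3$ and $w_2w_4$ of $H_8$ itself, so that the seven branch sets are $v$, $\{w_1,w_3\}$, $\{w_2,w_4\}$, $w_5$, and $w_6,w_7,w_8$ each augmented by a path interior; the remaining adjacencies are then read off from the fixed edge set of $H_8$. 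This is exactly the ``let branch sets absorb path interiors'' device you gesture at, but it has to be exhibited, not presumed.

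The second gap is in the case where $G[N(v)]$ contains a $4$-clique $W$ but $J:=G[N(v)\less W]\ne K_4$. Your plan --- choose $S$ so that $\overline{G[N(v)\less S]}$ is a matching --- generally cannot be carried out: since $G[N(v)]$ is $K_5$-free, each vertex of $J$ has a non-neighbour in $W$, and these missing edges can share endpoints (e.g.\ when $J\cong K_2\cup K_2$), so no choice of a $2$-element independent $S$ leaves only a matching of missing edges, and \cref{l:wonderful} does not give disjoint paths for missing edges that share a vertex. The paper instead splits on the isomorphism type of $J$: if $J$ has an induced $P_3$, contract that $P_3$ to one vertex and link $w_8$ to its non-neighbours in $W$ by \cref{l:wonderful}; if $J\cong K_2\cup K_2$, use the $2$-connectivity of $G\less(W\cup\{v\})$ (from \cref{mader}) to find a $2$-linkage between the two edges of $J$; the only surviving case is $J\cong K_3\cup K_1$, where no contradiction arises at all --- one checks directly that the isolated vertex of $J$ has a unique non-neighbour in $W$ and the two disjoint $4$-cliques exist. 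Note this last point also means your global framing (``assume no two disjoint $K_4$'s and build a $K_7$ minor'') cannot succeed uniformly: in the $K_3\cup K_1$ configuration the conclusion is established constructively, not by exhibiting a minor.
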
 

\begin{proof}  Since $G$ has no  $K_7$ minor, by \cref{t:exfun}, $e(G)\le 5|G|-15$ and so $\delta(G)\le 9$. On the other hand, since $G$ is $8$-contraction-critical, we have $\delta(G)\ge7$. Thus $7\le \delta(G)\le9$. Suppose  $\delta(G)=7$.  Let $x\in V(G)$ be a $7$-vertex  in $G$. By \cref{l:alpha2}, $G[N(x)]=K_7$, a contradiction.   This proves (a).  It is simple to check that  $2n_8+n_9\ge30$ because $8n_8+9n_9+10(|G|-n_8-n_9)\le 2e(G)\le10|G|-30$.
To prove (c), suppose    $G$ contains  a subgraph $H$ such that  $|H|\le7$ and  $H\se K_6$. Let  $x \in V(G) \less V(H)$.   By Theorem~\ref{mader}, $G$ is $7$-connected and so by Menger's theorem,  there exist $|H|$ internally disjoint $(x, V(H))$-paths, say   $Q_1, \dots, Q_{|H|}$.   By contracting all the edges of   $Q_1\less x,\ldots,  Q_{|H|}\less x$,  we obtain a $K_7$  minor of $G$, a contradiction. \medskip

 \begin{figure}[htbp]
\centering
\includegraphics[scale=0.5]{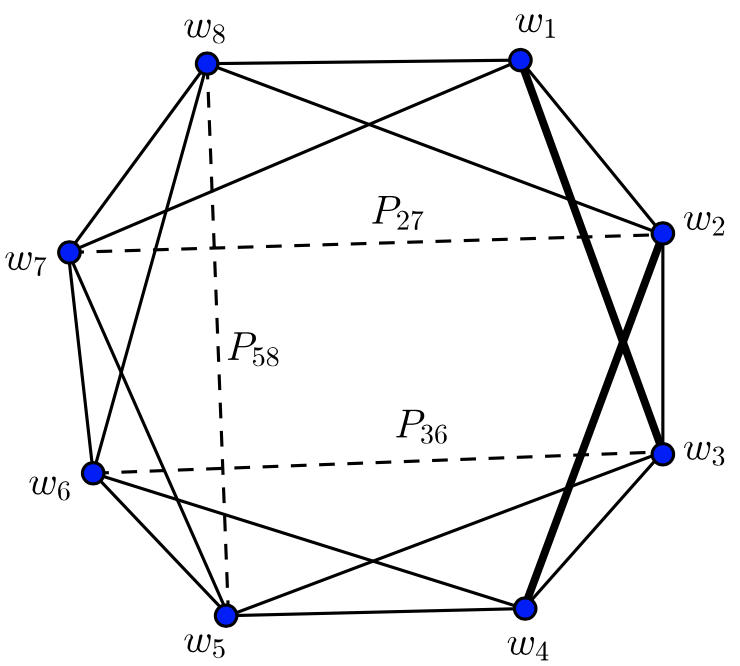}
\caption{\small $H_8$ with $P_{27}$, $P_{36}$ and $P_{58}$  shown as dotted lines, edges $w_1w_3, w_2w_4$ in bold lines. }
\label{H8b}
\end{figure}

It remains to prove (d). 
Let $v\in V(G)$ be an $8$-vertex.  By \cref{l:alpha2}, $\alpha(G[N(v)])\le2$. Since $G$ has no $K_7$ minor, we see that $\alpha(G[N(v)])=2$.  We claim  that  \medskip

\noindent ($*$) $G[N(v)]$  has a $4$-clique.    \medskip

 Suppose    $G[N(v)]$  is $K_4$-free.   By Lemma~\ref{l:H8}, $G[N(v)]$ contains $H_8$ as a subgraph.   Let $ w_1, \dots, w_8 $ be the vertices of $H_8$, as depicted in  Figure \ref{H8b}.  Then  $w_1w_4\notin E(G)$ because $G[N(v)]$ is $K_4$-free. We may assume that $M=\{w_2w_7, w_3w_6, w_5w_8\}$ is a set of missing edges of $G[N(v)]$. 
By Lemma~\ref{l:wonderful} applied to $N(v)$ with $S=\{w_1, w_4\}$ and $M=\{w_2w_7, w_3w_6, w_5w_8\}$, 
there exist pairwise vertex-disjoint paths $P_{27}$ with ends $w_2, w_7$, $P_{36}$ with ends $w_3, w_6$, $P_{58}$ with ends $w_5, w_8$, and all their internal vertices in $G\less N[v]$. Now by contracting each of the edges $w_1w_3$, $w_2w_4$ to a single vertex, and then all the edges  of  $P_{27}\less w_2$, $P_{36}\less w_3$ and $P_{58}\less w_5$,  we see that  $G\se   K_7$, a contradiction. This proves ($*$).\medskip

By  ($*$),  let  $W:=\{w_1, w_2, w_3, w_4\}\subseteq  N(v)$  be  a $4$-clique in $G$.  Let $w_5, w_6, w_7, w_8$ be the   vertices of $N(v)\less W$.  Let $J := G[\{w_5, w_6, w_7, w_8\}]$.  Suppose   $J\ne K_4$.   By Lemma~\ref{l:deg8}(c), $G[N(v)]$ is $K_5$-free.  Thus  for each $j\in\{5,6,7,8\}$,   $w_j $  has at least one non-neighbor in $W$;    by Lemma~\ref{l:deg8}(a),  $w_j$  is adjacent to at least one vertex in $G\backslash N[v]$.  We next prove  that $J$ is isomorphic to $K_3\cup K_1$.   \medskip

Suppose   $J$ contains   $P_3$  as an induced subgraph. We may assume that $w_5, w_6, w_7$ are the vertices of $P_3$ in order. Since $\alpha(G[N(v)])=2$, we see that  $w_j$ is adjacent to $w_5$ or $w_7$ for all  $j\in \{1,2,3,4,8\}$.   We may assume that $w_8$ is anti-complete  to $\{w_1, \dots, w_t\}$ for some   $t\in[4]$.  By Lemma~\ref{l:wonderful} applied to $N(v)$ with $S=\{w_5, w_7\}$ and $M=\{w_8w_1, \dots, w_8w_t\}$, there exist  $t$ pairwise internally vertex-disjoint paths $Q_{81},   \dots, Q_{8t}$, where each $Q_{8j}$ has ends $w_8, w_j$ and all its internal vertices in $G\less N[v]$ for all $j\in\{1,2, \dots, t\}$. Now by contracting $P_3$ to a single vertex,  and then all the edges  of  $Q_{81}\less w_1,   \dots, Q_{8t}\less w_t$ onto $w_8$,  we see that  $G\se K_7$, a contradiction.
This proves that $J$ does not contain  $P_3$ as an induced subgraph.   Suppose next that  $J$ is isomorphic to  $K_2\cup K_2$. 
We may assume that  $w_5 w_6$ and $w_7 w_8$ are the two edges of $J$.   By Theorem~\ref{mader},  $G \backslash \{v, w_1, w_2, w_3, w_4\}$ is 2-connected. Thus $G \backslash \{v, w_1, \dots, w_4\}$ contains  two vertex-disjoint paths, say $Q_1$ and $Q_2$,  between  $\{w_5, w_6\}$ and $\{w_7, w_8\}$.  We may assume that $Q_1$ has ends $w_5, w_7$ and $Q_2$ has ends $w_6, w_8$.  Since $\{w_5, w_7\}$ and $\{w_6, w_8\}$ are independent sets of size $2$, every vertex in $\{w_1, \dots, w_4\}$ must be adjacent to at least one vertex in  $\{w_5, w_7\}$ and $\{w_6, w_8\}$, respectively.  By contracting $Q_1$ and $Q_2$ to two distinct  vertices, together with $x, w_1, \ldots, w_4$, we see that  $G\se K_7$,   a contradiction. This proves that  $J$ is not isomorphic to  $K_2\cup K_2$.   It follows that  $J$ is isomorphic to $K_3\cup K_1$, because $\alpha(J)=2$ and $J$ does not contain $P_3$ as an induced subgraph.  \medskip

We may assume that $d_J(w_8)=0$.  Then  $J[\{w_5, w_6, w_7\}]=K_3$.  Since $G[N(v)]$ is $K_5$-free and $\alpha(G[N(v)])=2$, we see that $w_8$ has   exactly one non-neighbor, say $w_1$, in $W$. Then $w_1$ is complete to $\{w_5, w_6, w_7\}$ and $w_8$ is complete to $\{w_2, w_3, w_4\}$. Therefore  $G[N(v)]$ has two disjoint $4$-cliques $\{w_8, w_2, w_3, w_4\}$ and $\{w_1, w_5, w_6, w_7\}$, as desired. \end{proof}  

 Lemma~\ref{l:deg8}(d) implies  the following:

\begin{cor}\label{2K5}
 Let $G$ be an  $8$-contraction-critical graph with no $K_7$ minor. Then   every $8$-vertex $v$ in  $G$ belongs to two  $5$-cliques  having only $v$ in common. 
\end{cor}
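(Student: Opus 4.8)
The plan is to derive the statement as an immediate consequence of \cref{l:deg8}(d). Let $v\in V(G)$ be an $8$-vertex. By \cref{l:deg8}(d), the graph $G[N(v)]$ contains two disjoint $4$-cliques; call them $A$ and $B$, so that $A,B\subseteq N(v)$, each of $G[A]$ and $G[B]$ is a $K_4$, and $A\cap B=\emptyset$. (Since $d(v)=8$, these two cliques in fact partition $N(v)$, though we will not need this.)

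Next I would observe that every vertex of $N(v)$ is, by definition, adjacent to $v$; hence $v$ is complete to $A$ and complete to $B$. Therefore both $G[A\cup\{v\}]$ and $G[B\cup\{v\}]$ are complete graphs on five vertices, i.e.\ $A\cup\{v\}$ and $B\cup\{v\}$ are $5$-cliques of $G$. Finally, $(A\cup\{v\})\cap(B\cup\{v\})=(A\cap B)\cup\{v\}=\{v\}$, so these two $5$-cliques have only the vertex $v$ in common, which is exactly the assertion.

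I do not expect any genuine obstacle: all of the work is already carried out in \cref{l:deg8}(d), whose proof (the case analysis on the structure of $J=G[N(v)\less W]$, using \cref{l:wonderful} to build the required vertex-disjoint paths and \cref{mader} for connectivity) is the substantive part. The corollary merely repackages "two disjoint $4$-cliques inside $N(v)$'' into the form "two $5$-cliques through $v$ meeting only in $v$'', which is the shape in which this structural information will later be fed into results such as \cref{KNZ} on extracting a $K_7$ minor from three $5$-cliques.
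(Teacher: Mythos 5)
Your proposal is correct and matches the paper exactly: the paper states \cref{2K5} as an immediate consequence of \cref{l:deg8}(d) without further argument, and your derivation (adjoining $v$ to each of the two disjoint $4$-cliques in $G[N(v)]$) is precisely the intended one-line justification.
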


\begin{figure}[htbp]
\centering
\subfigure[][\label{threeK5a}]{
\hfill\includegraphics[scale=0.45]{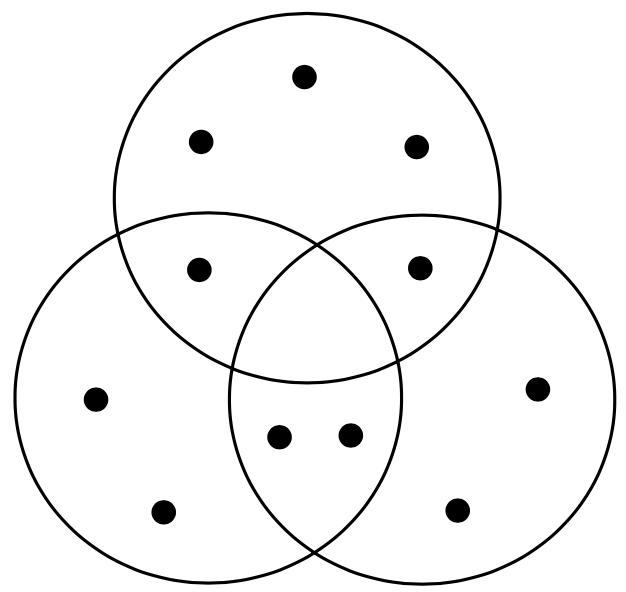}
}
\hskip 2cm
\subfigure[][\label{threeK5b}]{
\hfill\includegraphics[scale=0.6]{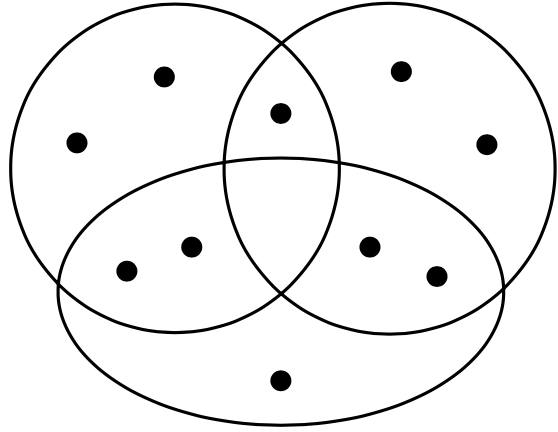}
}
\caption{Two different cases of three $5$-cliques.}
\label{threeK5}
\end{figure}

\begin{lem}\label{3K5}
 Let $G$ be an $8$-contraction-critical graph. If $G$ has     two different $5$-cliques   with exactly three vertices in common or three different $5$-cliques as depicted in Figure~\ref{threeK5}, then  $G \se K_7$.
\end{lem}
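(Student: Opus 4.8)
The plan is to reduce every case to one clean sub-statement and then feed it the right clique/vertex data.

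\emph{Sub-statement.} If $G$ is $8$-contraction-critical and contains a triangle $T$ together with four distinct vertices $v_1,v_2,v_3,v_4\in V(G)\less T$ such that $T$ is complete to $\{v_1,v_2,v_3,v_4\}$, then $G\se K_7$. To see this, note that by \cref{mader} $G$ is $7$-connected, and since $\chi(G)=8$ we have $|G|\ge 8$, so $G\less T$ is a $4$-connected graph containing the four distinct vertices $v_1,\dots,v_4$. Apply \cref{t:rootedK4} to these vertices in $G\less T$. If $G\less T$ has a $K_4$ minor rooted at $v_1,\dots,v_4$, with connected, pairwise disjoint, pairwise adjacent branch sets $V_1,\dots,V_4\subseteq V(G)\less T$ (where $v_j\in V_j$), then the three singletons $\{t_1\},\{t_2\},\{t_3\}$ (writing $T=\{t_1,t_2,t_3\}$) together with $V_1,V_2,V_3,V_4$ are the branch sets of a $K_7$ minor of $G$: the singletons are pairwise adjacent since $T$ is a triangle, each $t_i$ is adjacent to each $V_j$ because $t_iv_j\in E(G)$, and the $V_j$ are pairwise adjacent by the rooted minor. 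If instead $G\less T$ is planar, then $\chi(G\less T)\le 4$ by the Four Color Theorem, so $\chi(G)\le\chi(G\less T)+|T|\le 7$, contradicting $\chi(G)=8$; hence this alternative cannot occur and the sub-statement holds.

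It then remains to produce such a $T$ and $v_1,\dots,v_4$ in each case of the hypothesis. For two $5$-cliques $L_1,L_2$ with $|L_1\cap L_2|=3$: take $T:=L_1\cap L_2$ and let $v_1,\dots,v_4$ be the four (distinct) vertices of $(L_1\cup L_2)\less T$; since each $v_j$ lies in $L_1$ or in $L_2$, and both contain $T$, the triangle $T$ is complete to $\{v_1,\dots,v_4\}$. For the two configurations of Figure~\ref{threeK5}, I expect the displayed clique structure to yield either directly a triangle $T$ complete to four outside vertices, or a $4$-clique $Q$ that is complete to three outside vertices $r_1,r_2,r_3$; in the latter situation one takes $T$ to be any triangle inside $Q$ and $\{v_1,v_2,v_3,v_4\}$ to be the fourth vertex of $Q$ together with $r_1,r_2,r_3$, and all required adjacencies follow since $Q$ is a clique and is complete to $\{r_1,r_2,r_3\}$. (Should either picture instead already contain a $7$-clique, or a $K_6$ on at most seven vertices, one can invoke directly the path-contraction argument from the proof of \cref{l:deg8}(c).)

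The minor constructions themselves are short; the real work — and the step I expect to be the main obstacle — is the bookkeeping for Figure~\ref{threeK5}: reading off from each picture the correct triangle (or $4$-clique and three common neighbours), checking that the four designated vertices are genuinely distinct and lie outside $T$, and confirming that the branch sets returned by \cref{t:rootedK4} stay inside $G\less T$ (this last point is automatic, since the entire rooted-minor argument is carried out inside $G\less T$). The only global inputs needed are $|G|\ge 8$, which follows from $\chi(G)=8$, and the $7$-connectivity of $G$ from \cref{mader}; in particular no lower bound on $|G|$ beyond this is required.
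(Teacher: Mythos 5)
Your first case (two $5$-cliques meeting in exactly three vertices) is correct and is in fact exactly the paper's argument: delete the common triangle $T=L_1\cap L_2$, note that the remainder is $4$-connected by \cref{mader}, rule out the planar alternative of \cref{t:rootedK4} via the Four Color Theorem, and root a $K_4$ at the four private vertices.

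The gap is in the two configurations of Figure~\ref{threeK5}, where the reduction to your sub-statement does not apply. In Figure~\ref{threeK5}(a) the cliques are $L_1=\{v_1,v_2,w_1,x_1,x_2\}$, $L_2=\{v_1,v_2,w_2,x_3,x_4\}$, $L_3=\{w_1,w_2,y_1,y_2,y_3\}$ (pairwise intersections of sizes $2,1,1$), and in Figure~\ref{threeK5}(b) they are $L_1=\{v_1,v_2,w_1,w_2,u\}$, $L_2=\{v_1,v_2,x_1,x_2,y\}$, $L_3=\{w_1,w_2,z_1,z_2,y\}$ (sizes $2,2,1$). Neither configuration forces a triangle complete to four of the depicted vertices, nor a $4$-clique with three common neighbours among them: in (a), for instance, the only guaranteed $4$-clique is $\{v_1,v_2,w_1,w_2\}$, each $x_i$ misses one of $w_1,w_2$, and the $y_j$ need not see $v_1,v_2$; and neither picture contains a $K_6$, so your parenthetical fallback is unavailable. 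What these cases need --- and what the paper does --- is a Menger argument rather than a rooted-$K_4$ argument. In (a), $G\less\{v_1,v_2,w_1,w_2\}$ is $3$-connected by \cref{mader}, so there are three pairwise disjoint paths from $\{y_1,y_2,y_3\}$ to $\{x_1,\dots,x_4\}$; contracting each path to a single vertex gives three branch sets that are pairwise adjacent and adjacent to $w_1,w_2$ (through their $y$-ends, all lying in the clique $L_3$) and adjacent to $v_1,v_2$ (through their $x$-ends, lying in $L_1\cup L_2$), which together with $\{v_1,v_2,w_1,w_2\}$ yields $K_7$. Case (b) is analogous, using two disjoint paths from $\{x_1,x_2\}$ to $\{z_1,z_2\}$ in the $2$-connected graph $G\less\{v_1,v_2,w_1,w_2,y\}$. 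The idea you are missing is that a contracted path with one end in each of two different cliques inherits the neighbourhoods of both ends, so the mutual adjacency of the branch sets comes for free from the cliques and plain Menger suffices; the rooted-$K_4$ theorem is only needed in the first case, where four branch sets must be made mutually adjacent without any clique providing those adjacencies.
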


 \begin{proof}      Assume first that   $G$ has  two different $5$-cliques $L_1$ and $L_2$ such that   $L_1 \cap L_2 = \{w_1, w_2, w_3\}$.  Let $v_1, v_2$ be the remaining vertices of $L_1$ and $v_3, v_4$ the remaining vertices of $L_2$.  By Theorem~\ref{mader}, $H :=G\less \{w_1, w_2, w_3\}$ is 4-connected. Then $H$ must be non-planar,  otherwise   $\chi(G)\le 7$, a contradiction.  By Theorem~\ref{t:rootedK4}, $H$ contains a $K_4$  minor   
 rooted at $v_1, \dots, v_4$ and so $G\se K_7$.   \medskip
 
 Assume next  that  $G$ has  three different $5$-cliques  $L_1$, $L_2$  and $L_3$ as given in Figure~\ref{threeK5}.  We  first consider the case that $L_1$, $L_2$  and $L_3$ are as depicted in Figure~\ref{threeK5}(a).   Let $L_1 = \{v_1, v_2, w_1, x_1, x_2\}$, $L_2 = \{v_1, v_2, w_2, x_3, x_4\}$ and $L_3 = \{w_1, w_2, y_1, y_2, y_3\}$.   By Theorem~\ref{mader}, $G\less\{v_1, v_2, w_1, w_2\}$ is 3-connected.  By Menger's theorem,  there exist three pairwise vertex-disjoint paths $Q_1, Q_2, Q_3$ between  $\{y_1, y_2, y_3\}$ and $\{x_1, x_2, x_3,  x_4\}$ in $G \less  \{v_1, v_2, w_1, w_2\}$.  By contracting each of $Q_1$, $Q_2$ and $Q_3$  to a single vertex, together with $v_1, v_2, w_1, w_2$, we see that  $G\se K_7$.  Finally we consider the case   that $L_1$, $L_2$ and $L_3$ are as depicted in Figure~\ref{threeK5}(b).  Let $L_1 = \{v_1, v_2, w_1, w_2, u\}$, $L_2 = \{v_1, v_2, x_1, x_2, y\}$ and $L_3 = \{w_1, w_2, z_1, z_2, y\}$.  By Theorem~\ref{mader},  $G \less \{v_1, v_2, w_1, w_2, y\}$ is 2-connected. By Menger's theorem,  there exist two vertex-disjoint paths $R_1, R_2$ between  $\{x_1, x_2\}$ and $\{z_1, z_2\}$ in $G \less \{v_1, v_2, w_1, w_2, y\}$.  Now contracting each of  $R_1$ and $R_2$ to a single vertex, together with $v_1, v_2, w_1, w_2, y$,  yields a $K_7$ minor in $G$.  \end{proof}

  We are now ready to prove the main result of this section.

\begin{thm}\label{t:n8}
 Let  $G$ be an $8$-contraction-critical  graph with no $K_7$ minor. Then   $n_8\le1$.  
 \end{thm}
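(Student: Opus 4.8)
The plan is a proof by contradiction. Suppose $G$ has two distinct $8$-vertices $u$ and $v$. By \cref{2K5}, $u$ lies in two $5$-cliques $A_1,A_2$ with $A_1\cap A_2=\{u\}$, and $v$ lies in two $5$-cliques $B_1,B_2$ with $B_1\cap B_2=\{v\}$; since $d(u)=d(v)=8$, this says exactly that $N(u)=(A_1\less u)\cup(A_2\less u)$ and $N(v)=(B_1\less v)\cup(B_2\less v)$ are partitions of the neighbourhoods into two $4$-cliques each. The aim is to combine these (at most) four $5$-cliques so as to reach one of the following, each of which is impossible: (a) two $5$-cliques meeting in exactly three vertices, or a triple of $5$-cliques as in \cref{threeK5} --- then $G\se K_7$ by \cref{3K5}; (b) three $5$-cliques whose union has at least $12$ vertices --- then $G\se K_7$ by \cref{KNZ}, which applies because $|G|\ge 19$ (when $n_8=2$ this is immediate, since $\delta(G)\ge 8$ and $e(G)\le 5|G|-15$ force $|G|\ge 28$; when $n_8\ge 3$ one first reduces to $|G|\ge 19$ separately); (c) a $K_6$ on at most seven vertices, contradicting \cref{l:deg8}(c); or (d) two nonadjacent vertices with the same neighbourhood, which an $8$-contraction-critical graph cannot contain, since deleting one of them yields a proper subgraph --- hence a proper minor --- still of chromatic number $8$.

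I would split on whether $uv\in E(G)$. In the non-adjacent case, $v\notin A_1\cup A_2$ and $u\notin B_1\cup B_2$, so every $A_i\cap B_j$ lies in $N(u)\cap N(v)$; examining $k:=|B_1\cap(A_1\cup A_2)|$ --- which is at most $4$, since $v\in B_1$ but $v\notin A_1\cup A_2$ --- and using $A_1\cap A_2=\{u\}$, one finds: $k\le 2$ gives a union of size $14-k\ge 12$ (route (b)); $k=3$ gives, according to how the three shared vertices split across the disjoint sets $A_1\less u$ and $A_2\less u$, either an exactly-three intersection or a \cref{threeK5}(a) pattern (route (a)); and $k=4$ gives \cref{threeK5} or an exactly-three intersection unless $B_1\less v$ coincides with $A_1\less u$ or with $A_2\less u$, a situation handled by repeating the argument with $B_2$ in place of $B_1$, the extreme outcome being $N(u)=N(v)$ (route (d)). In the adjacent case, after relabelling we may assume $v\in A_1$ and $u\in B_1$, so $A_1$ and $B_1$ both contain $\{u,v\}$ while $u\in A_2$, $v\notin A_2$, $v\in B_2$, $u\notin B_2$; here I would case on $|A_2\cap B_2|$: the value $3$ is route (a); the value $4$ makes $\{u,v\}\cup(A_2\less u)$ a $K_6$ on six vertices (route (c)); and the values at most $2$ either expose a \cref{threeK5}(a) pattern among three of the cliques (route (a)) or force $N[u]=N[v]$, in which case $G[N(u)\less v]$ contains a $4$-clique by \cref{l:deg8}(d), and that clique together with $\{u,v\}$ is a $K_6$ on six vertices (route (c)).

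The step I expect to be the main obstacle is the bookkeeping in the heavy-overlap regime, where $N(u)$ and $N(v)$ share many vertices and the outer cliques are forced to nearly coincide: there one must decide precisely which of (a)--(d) occurs, in particular separating a genuine \cref{threeK5} configuration from a mere $12$-vertex union, and pinpointing the $K_6$ on at most seven vertices. A secondary technical point is confirming that $|G|\ge 19$ when $n_8\ge 3$ (it is immediate when $n_8=2$), so that \cref{KNZ} is available precisely in those cases where only a large union, rather than a clean \cref{threeK5} pattern, can be produced.
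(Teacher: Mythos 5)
Your setup coincides with the paper's: take two $8$-vertices $u,v$, extract the four $5$-cliques via \cref{2K5}, split on whether $uv\in E(G)$, and drive each intersection pattern into \cref{3K5}, \cref{KNZ}, or \cref{l:deg8}(c). Your handling of the non-adjacent case is essentially sound (your route via the $(4,0)$-split, symmetry in $B_2$, and the ``identical neighbourhoods'' contradiction replaces the paper's slicker move of picking $w\in B_2\cap(A_1\cup A_2)$ and contracting $uw$ to a $K_6$ on $7$ vertices, but both work).

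The genuine gap is in the adjacent case, exactly in the ``heavy-overlap regime'' you flag. When $A_2\cap B_2=\emptyset$, avoiding route (b) forces $|A_1\cap B_2|\ge 3$, hence $=4$ (since $3$ is excluded and $u\in A_1\setminus B_2$), and symmetrically $|A_2\cap B_1|=4$. Writing $A_1=\{u,v,y_1,y_2,y_3\}$, $B_1=\{u,v,x_1,x_2,x_3\}$, one gets $A_2=\{u,x_1,x_2,x_3,w\}$ and $B_2=\{v,y_1,y_2,y_3,z\}$ with $w\ne z$. This configuration survives all four of your routes: the pairwise intersection sizes among $A_1,A_2,B_1,B_2$ are $1,2,4,4,0,1$, so no pair meets in exactly three vertices and no triple matches either picture in Figure~\ref{threeK5}; every triple union has at most $10$ vertices, so \cref{KNZ} is useless; $N[u]$ and $N[v]$ differ in the two vertices $w,z$, so route (d) never fires; and no $K_6$ on at most $7$ vertices is visible from the clique structure alone. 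The paper needs a genuinely new argument here (its Figure~\ref{2Deg8}): by $7$-connectivity there are four disjoint paths in $G\setminus\{u,v\}$ between $\{w,x_1,x_2,x_3\}$ and $\{z,y_1,y_2,y_3\}$, and contracting them appropriately together with $u$, $v$ and one $x_i$ produces $K_7$. That idea is absent from your proposal, so your case analysis does not close. A secondary point: \cref{KNZ} requires $|G|\ge 19$, which you verify only for $n_8=2$ and defer otherwise; this should be settled before route (b) is invoked.
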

 
 \begin{proof} Suppose    $n_8\ge2$. Let  $u, v \in V(G)$ be two  distinct vertices of degree 8 in $G$.  By Corollary~\ref{2K5},  let $L_1$ and $L_2$ be two $5$-cliques of  $G[N[u]]$ with $L_1\cap L_2=\{u\}$, and  $L_3$ and $L_4$ be two $5$-cliques of  $G[N[v]]$ with $L_3\cap L_4=\{v\}$. For each $i\in\{1,2\}$ and each $j\in\{3,4\}$, by  Theorem~\ref{KNZ}, $|L_1\cup L_2\cup L_j| \le 11$ and $|L_3\cup L_4\cup L_i| \le 11$. It follows that    $3 \le |L_j \cap (L_1 \cup L_2)| \le 5$  and $3 \le |L_i \cap (L_3 \cup L_4)| \le 5$.    By Lemma~\ref{3K5}, $|L_i \cap L_j| \ne 3$ for each  $ i\in\{1,2\}$ and each $ j\in\{3, 4\}$. \medskip
 
 We  claim that $uv\in E(G)$. Suppose    $uv\notin E(G)$. Then $u\notin L_3$ 
 and  so $3 \le |L_3 \cap (L_1 \cup L_2)| \le 4$.  Suppose  that $|L_3 \cap (L_1 \cup L_2)| = 4$. Let $w \in L_4 \cap (L_1 \cup L_2)$.  Note that $w \ne u, v$ and $w \notin L_3$.  Then  $G[L_3 \cup \{u, w\}]/uw= K_6$, contradicting Lemma~\ref{l:deg8}(c).  Thus $|L_3 \cap (L_1 \cup L_2)| = 3$.  Since  $|L_3 \cap L_1| \ne 3$ and $|L_3 \cap L_2| \ne 3$,   we may assume that $|L_3 \cap L_1| = 2$ and $|L_3 \cap L_2| = 1$.  But then $L_1$,   $L_2$ and $L_3$ are as depicted in Figure~\ref{threeK5}(a),  by Lemma~\ref{3K5}, $G \se K_7$, a contradiction. Thus $uv\in E(G)$, as claimed. \medskip
 
  We may assume  that $v \in L_1$ and $u \in L_3$.  Then $u, v\in L_1\cap L_3$,  and so $v \notin L_2$ and $u \notin L_4$.  By Lemma~\ref{l:deg8}(c),    $G[L_2\cup\{v\}]$ is $K_6$-free. Thus there exists $w\in L_2$ such that $vw\notin E(G)$. Similarly, there exists $z\in L_4$ such that $uz\notin E(G)$.  Then $u, w\notin L_4$   and $v, z\notin L_2$.  Thus   $|L_2 \cap L_4|\le3$.  Since $|L_2 \cap L_4|\ne3$,   we have  $|L_2 \cap L_4|\le2$.  Suppose $1\le |L_2 \cap L_4|\le2$. Let $z^*\in L_2\cap L_4$. Then $z, z^*\notin L_1$ and so $ |L_1 \cap L_4|\le3$. Recall that   $|L_4\cap(L_1\cup L_2)| \ge3$. Thus  $ |L_1 \cap L_4|\ge1$.  Since  $|L_1\cap L_4|\ne3$, we see that       $1\le |L_1 \cap L_4|\le2$.  Now it is straightforward to  check that   $L_1$, $L_2$ and $L_4$ are as depicted in Figure~\ref{threeK5}(a) if $|L_1\cap L_4|=1$ and $|L_2\cap L_4|=2$, or $|L_1\cap L_4|=2$ and $|L_2\cap L_4|=1$; and in Figure~\ref{threeK5}(b) if $|L_1\cap L_4|=2$ and $|L_2\cap L_4|=2$.  By Lemma~\ref{3K5}, $G\se K_7$, a contradiction.   This proves that  $L_2 \cap L_4=\emptyset$.   Since $|L_1\cap L_4|\ne3$  and $|L_1\cup L_2\cup L_4| \le 11$, we must have $L_4\less L_1=\{z\}$ (i.e., $|L_1\cap L_4|=4$). Similarly, $L_2\less L_3=\{w\}$ (i.e., $|L_2\cap L_3|=4$).
  Now $L_1, L_2, L_3, L_4$ are as depicted in Figure~\ref{2Deg8}.   \medskip
  
 \begin{figure}[htbp]
\centering
\includegraphics[scale=0.3]{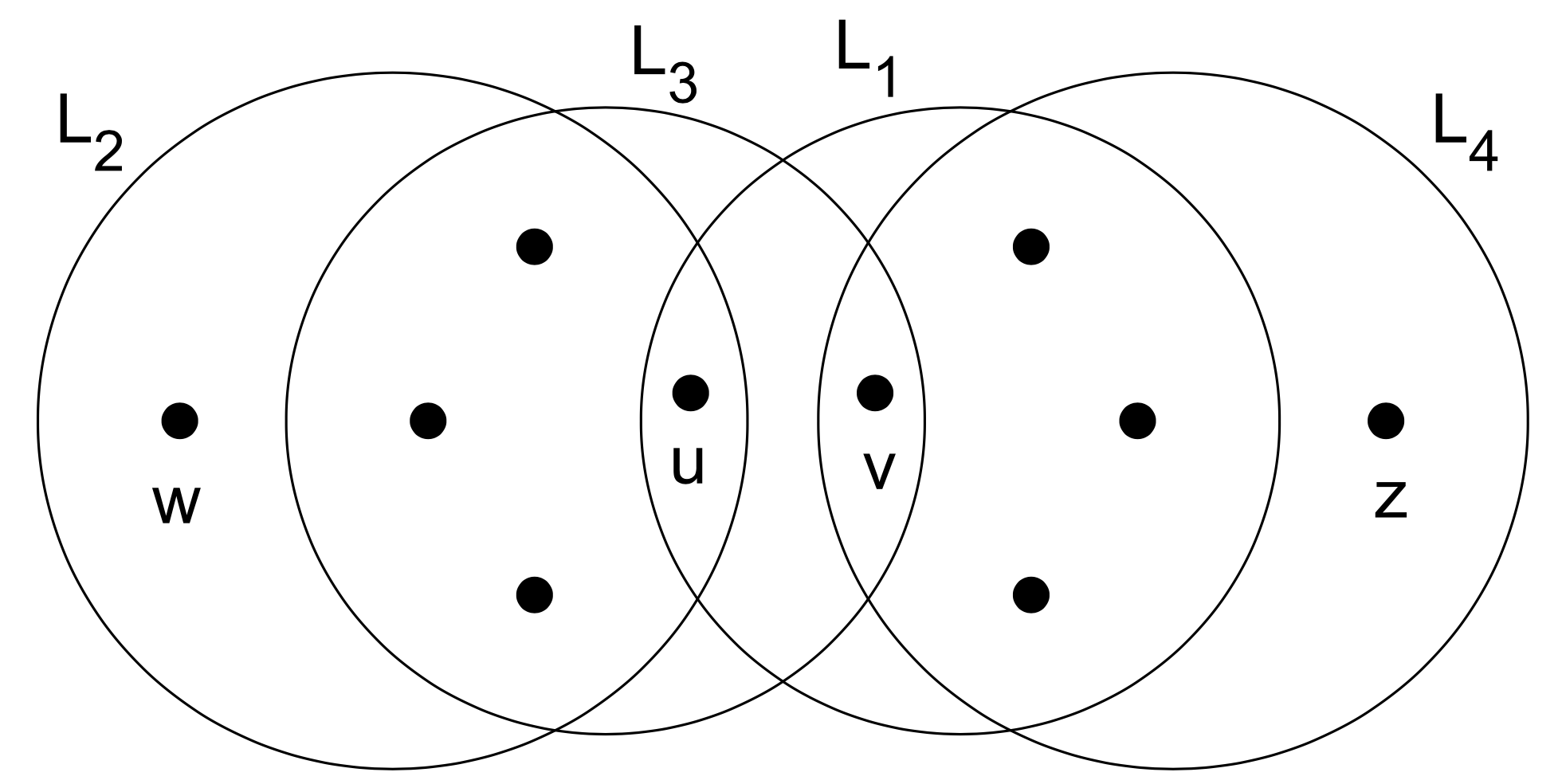}

\caption{ Two  $8$-vertices $u, v$. }
\label{2Deg8}
\end{figure}

Let $L_1\less\{u,v\}:=\{y_1, y_2, y_3\}$ and $L_3\less \{u,v\}:=\{x_1, x_2, x_3\}$.  Since  $G$ is $7$-connected,  there exist four pairwise vertex-disjoint paths, say $Q_1, Q_2, Q_3, Q_4$, between  $\{w, x_1, x_2, x_3\}$ and $\{z, y_1, y_2, y_3\}$ in  $G\less\{u,v\}$. We may assume that $Q_4$  has  ends $x_3$ and $y_j$ for some $j\in[3]$.  By contracting each of $Q_1, Q_2, Q_3, Q_4\less x_3$ to a single vertex, together with $u, v, x_3$, we see that  $G\se K_7$, a contradiction. \medskip

  This completes the proof of \cref{t:n8}.  \end{proof}

   \section{$9$-vertices}\label{s:deg9}
 
 By \cref{l:deg8}(b) and \cref{t:n8}, we see that every $8$-contraction-critical graph with no $K_7$ minor has at least $28$ $9$-vertices. In this section, we study the properties of $G[N(x)]$ for such     $9$-vertices $x$.    
  
 \begin{lem}\label{l:deg9}
Let $H$ be a graph with  $|H| = 9$ and $\delta(H) \ge 5$.  
If $H$ is $K_4$-free, then  either $H\se K_6$, or $H$ is isomorphic to $\overline{K_3}+C_6$. 
\end{lem}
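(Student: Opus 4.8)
The plan is to suppose that $H$ has no $K_6$ minor and prove $H\cong\overline{K_3}+C_6$; write $n=|H|=9$. \emph{Global bounds.} Since $H$ has no $K_6$ minor, \cref{t:exfun} with $p=6$ gives $e(H)\le 4n-\binom52=26$, while $\delta(H)\ge5$ gives $e(H)\ge23$; so $23\le e(H)\le26$ and every degree of $H$ lies in $\{5,6,7,8\}$. As $H$ is $K_4$-free, $H[N(v)]$ is triangle-free for every $v$, hence has at most $\lfloor d(v)^2/4\rfloor$ edges by Mantel's theorem; also $|V(H)\setminus N[v]|=8-d(v)$ and $\omega(H)\le3$. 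I will use the last fact as follows: any $K_6$-minor model of $H$ has at most three singleton branch sets (they would form a clique), so such a model must consist of three singletons $\{t_1\},\{t_2\},\{t_3\}$ with $t_1t_2t_3$ a triangle of $H$, together with three pairwise disjoint edges of $H$ covering the remaining six vertices, each edge being adjacent to all of $t_1,t_2,t_3$ and to the other two edges. Thus to locate a $K_6$ minor it suffices to exhibit such a triangle and matching.

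\emph{Reducing the maximum degree.} I would first show $\Delta(H)\le6$. If $d(v)=8$, then $H\setminus v$ is triangle-free on $8$ vertices with $\delta(H\setminus v)\ge4$, so $e(H\setminus v)\ge16=\lfloor 8^2/4\rfloor$, forcing $H\setminus v\cong K_{4,4}$ by the equality case of Mantel's theorem; since $H$ is the join of a single vertex with $K_{4,4}$ and $K_{4,4}\se K_5$, we get $H\se K_6$, a contradiction. If $d(v)=7$, let $w$ be the unique vertex of $V(H)\setminus N[v]$; each $u\in N(v)$ loses at most one edge to $v$ and one to $w$, so $\delta(H[N(v)])\ge3$, and a triangle-free graph on $7$ vertices with minimum degree $\ge3$ contains no $C_5$ (its remaining two vertices cannot supply a third neighbour to all five cycle vertices), hence is bipartite with parts of sizes $3$ and $4$, hence is $K_{3,4}$. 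Since $N(w)\subseteq N(v)$ and $d(w)\ge5$, $w$ has at least five neighbours in $V(K_{3,4})$, and in each of the few possibilities (distinguished by how many of these lie in the part of size $3$) one builds a $K_6$ minor using a triangle through $w$ -- again a contradiction. Hence every degree of $H$ is $5$ or $6$.

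\emph{Hexagonal neighbourhoods.} Fix a $6$-vertex $v$ (one exists since $2e(H)\ge46$) and let $B=V(H)\setminus N[v]=\{b_1,b_2\}$. Each $u\in N(v)$ loses one edge to $v$ and at most two to $B$, so $\delta(H[N(v)])\ge2$; thus $H[N(v)]$ is a triangle-free graph on $6$ vertices with minimum degree at least $2$. Moreover $N(b_i)\subseteq N(v)\cup\{b_{3-i}\}$ and $d(b_i)\ge5$, so each $b_i$ has at least four neighbours in $N(v)$. The crux is to show $e(H[N(v)])=6$: if $e(H[N(v)])\ge7$, then $H[N(v)]$ is one of the finitely many triangle-free graphs on $6$ vertices with minimum degree $\ge2$ and at least $7$ edges -- namely $K_{3,3}$, $K_{3,3}$ minus an edge, $K_{3,3}$ minus two independent edges, $K_{2,4}$, or $C_5$ together with a sixth vertex of degree $2$ -- and in every case, exploiting the lower bound of $4$ on $|N(b_i)\cap N(v)|$ and the upper bound $e(H)\le26$ (and sometimes the triangle-freeness of $H[N(b_i)]$) to pin down the adjacencies of $b_1,b_2$, one finds a triangle of $H$ and a disjoint matching realizing a $K_6$ minor, a contradiction. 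Hence $e(H[N(v)])=6$; being triangle-free with minimum degree $\ge2$ on $6$ vertices, $H[N(v)]$ is then $2$-regular, so $H[N(v)]\cong C_6$. I expect this case analysis (and, to a lesser extent, the $d(v)=7$ subcase above) to be where essentially all the work lies, since in each subcase one has to make the right choice of the triangle in the $K_6$-minor model.

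\emph{Conclusion.} Let $v$ be a $6$-vertex with $N(v)=\{c_1,\dots,c_6\}$ and $H[N(v)]=C_6=c_1c_2\cdots c_6c_1$, and let $B=\{b_1,b_2\}$. For each $i$ the neighbours of $c_i$ in $H$ lie among $\{v,c_{i-1},c_{i+1},b_1,b_2\}$, and $d(c_i)\ge5$ forces $c_i\sim b_1$ and $c_i\sim b_2$. Hence $\{b_1,b_2\}$ is complete to $N(v)$, giving $d(b_1)=d(b_2)=6$; since $\Delta(H)\le6$ this forces $b_1\not\sim b_2$ (and $b_i\not\sim v$ as $b_i\notin N[v]$). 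Therefore $\{v,b_1,b_2\}$ is an independent set, each vertex of which is complete to the $6$-cycle $c_1c_2\cdots c_6$, i.e.\ $H\cong\overline{K_3}+C_6$, as required.
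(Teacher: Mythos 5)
Your strategy is sound and genuinely different from the paper's: the paper proves this lemma by an exhaustive computer search (and explicitly omits its ``long'' computer-free proof), whereas you outline a direct hand argument. The structural skeleton is correct. Since $H$ is $K_4$-free on $9$ vertices, any $K_6$-minor model must indeed be a triangle plus three disjoint edges covering the remaining six vertices, each branch set seeing all the others; the reduction to $\Delta(H)\le 6$ is essentially right (the $d(v)=8$ case via Mantel's equality case and $K_{4,4}\se K_5$ checks out, and the $d(v)=7$ case does force $H[N(v)]\cong K_{3,4}$, modulo the small point that excluding $C_3$ and $C_5$ from a $7$-vertex graph does not by itself yield bipartiteness --- you must also rule out a spanning $C_7$, e.g.\ by noting that any chord of a $C_7$ creates a $C_3$ or a $C_5$ while a chordless $C_7$ violates $\delta\ge 3$). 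The concluding derivation of $\overline{K_3}+C_6$ from $H[N(v)]\cong C_6$ is correct.

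The genuine gap is exactly where you flag it: the claim that $e(H[N(v)])\ge 7$ forces a $K_6$ minor is asserted for each of the five candidate graphs ($K_{3,3}$, $K_{3,3}-e$, $K_{3,3}$ minus two independent edges, $K_{2,4}$, and $C_5$ plus a degree-$2$ vertex) but never proved, and the $d(v)=7$ endgame (``in each of the few possibilities \dots\ one builds a $K_6$ minor'') is likewise deferred. These are not routine verifications: in each subcase one must analyse the possible attachments of $b_1,b_2$ (constrained only by $|N(b_i)\cap N(v)|\ge 4$, $K_4$-freeness and $e(H)\le 26$) and then exhibit a triangle and a perfect matching on the other six vertices --- one of which must absorb the vertex $v$, which is nonadjacent to $b_1,b_2$ --- such that every pair of branch sets is joined; the right choice of triangle and matching genuinely varies with which edges are absent. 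This is precisely the portion of the argument that the authors found long enough to delegate to a computer, so as written your proposal is a correct and promising outline rather than a complete proof.
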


\begin{proof}  \cref{l:deg9}  can be checked by computers, see Appendix. We have a  computer-free proof for  \cref{l:deg9} but  is long, we omit it here.\end{proof}

 \begin{lem}\label{K5}

 Let $G$ be an $8$-contraction-critical graph with no  $K_7$ minor and let   $v$ be a   $9$-vertex   in $G$. Then 
 \begin{enumerate}[(i)]
 
 \item $G[N[v]]$  has a $4$-clique, and 
 \item   either $G[N[v]]$ has a  $5$-clique,  or  $1\le \delta(G[N(v)])\le4$ and $\alpha(G[N(v)])=3$.
\end{enumerate}
\end{lem}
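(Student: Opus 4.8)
The plan is to study the graph $H := G[N(v)]$, which has exactly $9$ vertices, while exploiting that $v$ is adjacent to every vertex of $N(v)$ — so $v$ is a universal vertex of $G[N[v]]$. In particular, $G[N[v]]$ has a $4$-clique (resp.\ a $5$-clique) if and only if $H$ has a triangle (resp.\ a $K_4$), and any $K_6$ minor of $H$ — or, more generally, of any subgraph of $G\less v$ each of whose branch sets meets $N(v)$ — upgrades to a $K_7$ minor of $G$, contradicting the hypothesis. The two workhorses are Dirac's inequality $\alpha(H) \le d(v) - 8 + 2 = 3$ (\cref{l:alpha2}) and the Ramsey fact $R(3,4) = 9$, that is, every triangle-free graph on $9$ vertices has an independent set of size $4$. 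Part (i) is then immediate: if $H$ were triangle-free we would obtain $\alpha(H) \ge 4$, contradicting $\alpha(H) \le 3$; hence $H$ has a triangle and $G[N[v]]$ has a $4$-clique.

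For (ii) I distinguish cases according to $\alpha(H) \in \{1,2,3\}$. If $\alpha(H) = 1$ then $H = K_9$ contains a $K_4$; if $\alpha(H) = 2$ then $\overline{H}$ is triangle-free on $9$ vertices, so $R(3,4) = 9$ gives $\alpha(\overline{H}) \ge 4$, i.e.\ $H$ contains a $K_4$; in either case $G[N[v]]$ has a $5$-clique and we are done. So assume $\alpha(H) = 3$. If $H$ contains a $K_4$ we are again done, so assume $H$ is $K_4$-free; it remains to show $1 \le \delta(H) \le 4$.

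For $\delta(H) \le 4$, suppose $\delta(H) \ge 5$. Then \cref{l:deg9} applies to $H$ and gives either $H \se K_6$, whence $G \se K_7$, a contradiction, or $H \cong \overline{K_3} + C_6$. In the latter case write $N(v) = \{a_1, a_2, a_3\} \cup \{b_1, \dots, b_6\}$ with $\{a_1, a_2, a_3\}$ independent, $b_1 b_2 \cdots b_6 b_1$ a cycle, and each $a_i$ complete to $\{b_1, \dots, b_6\}$; then $\{a_1, a_2, a_3\}$ is a maximum independent set of $H$, so \cref{l:wonderful} applies with $x = v$, $s = 1$ and $S = \{a_1, a_2, a_3\}$, and taking $M$ to be the single non-edge $b_2 b_5$ of the cycle yields a $(b_2, b_5)$-path $P$ with all internal vertices in $G \less N[v]$. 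One checks that $G$ then has a $K_7$ minor with branch sets $\{v\}$, $\{a_1\}$, $\{b_1\}$, $\{b_2\}$, $\{a_2, b_3\}$, $\{a_3, b_4\}$, and $\{b_5, b_6\} \cup (V(P) \less \{b_2\})$, using throughout that $a_1, a_2, a_3$ are complete to the cycle — a contradiction. For $\delta(H) \ge 1$, suppose some $w \in N(v)$ is isolated in $H$. Since $w$ is isolated and $\alpha(H) = 3$, we get $\alpha(H \less w) \le 2$; and $H \less w$ is $K_4$-free on $8$ vertices, hence not $K_8$, so $\alpha(H \less w) = 2$. By \cref{l:H8}, $H \less w$ contains $H_8$ as a subgraph; labelling its vertices $w_1,\dots,w_8$ as in the proof of \cref{l:deg8}(d), the pair $\{w_1, w_4\}$ is a non-edge of $G$ (because $H$ is $K_4$-free), so $S := \{w, w_1, w_4\}$ is a maximum independent set of $H$. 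Applying \cref{l:wonderful} with this $S$ and $M = \{w_2 w_7, w_3 w_6, w_5 w_8\}$ and then contracting $w_1 w_3$, $w_2 w_4$ and the three resulting paths, exactly as in the proof of claim $(*)$ of \cref{l:deg8}(d), produces a $K_7$ minor of $G$ — a contradiction. Hence $1 \le \delta(H) \le 4$, completing (ii).

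The step I expect to be the main obstacle is the exceptional case $H \cong \overline{K_3} + C_6$ in the bound $\delta(H) \le 4$: unlike the case $H \se K_6$, joining $v$ to $H$ does \emph{not} by itself yield a $K_7$ minor (indeed $\overline{K_3} + C_6$ has no $K_6$ minor), so it is genuinely necessary to route one chord of the $6$-cycle through $G \less N[v]$ using \cref{l:wonderful} and then select seven pairwise-adjacent connected branch sets that absorb the interior of that path. Verifying this explicit minor — together with checking, in the argument for $\delta(H) \ge 1$, that the $H_8$-contraction of \cref{l:deg8}(d) goes through verbatim once the isolated vertex $w$ is appended to the independent set $S$ — are the essential (if mechanical) parts; everything else is a short deduction from \cref{l:alpha2}, the Ramsey value $R(3,4) = 9$, and the structural \cref{l:deg9}.
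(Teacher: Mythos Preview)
Your proof is correct and follows the same overall architecture as the paper's: bound $\alpha(H)\le3$ via \cref{l:alpha2}, invoke \cref{l:deg9} for the $\delta(H)\ge5$ sub-case, and dispose of both the $\overline{K_3}+C_6$ exception and the isolated-vertex case by routing paths through $G\less N[v]$ and contracting onto an $H_8$-type configuration via \cref{l:wonderful}. There are two pleasant differences worth noting. First, for part~(i) and for ruling out $\alpha(H)\le 2$ in part~(ii) you invoke the Ramsey value $R(3,4)=9$ (applied to $H$ and to $\overline H$ respectively); this is cleaner than the paper's ad~hoc degree count for~(i) and its separate min-degree/complementary-clique argument for forcing $\alpha(H)=3$. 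Second, in the $\overline{K_3}+C_6$ case you obtain the needed chord $b_2b_5$ directly from \cref{l:wonderful} and exhibit the seven branch sets explicitly, whereas the paper appeals to $7$-connectivity (\cref{mader}) to find some chord-path and then asserts $G[N[v]]+xy\se K_7$ without writing out the minor. Both routes are valid; yours is a bit more self-contained and uniform in its use of \cref{l:wonderful}.
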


\begin{proof}   By \cref{l:alpha2},   $\alpha(G[N(v)])\le3$. 
 To prove (i),  suppose $G[N(v)]$ is $K_3$-free. Let $u\in N(v)$.    Since   $\alpha(G[N(v)])\le3$, we see that  $G[N(v)\cap N[u]]$   has a $3$-clique   if $|N(v)\cap N(u)|\ge4$,  a contradiction.  Thus  $|N(v)\cap N(u)|\le3$. If $|N(v)\cap N(u)|\le2$, then  $|N(v)\less N[u]|\ge6$ and $\alpha(G[N(v)\less N[u]])\le2$; thus    $G[N(v)\less N[u]$  has a   $3$-clique because the Ramsey number $R(3,3)=6$, a contradition.   Thus $|N(v)\cap N(u)|=3$ and so  $d_{G[N(v)]}(u)=3$. By the arbitrary  choice of $u$,  we see that $G[N(v)]$ is 3-regular, which is impossible because $|N(v)|=9$.  This proves (i). \medskip

To prove (ii),  suppose  $G[N(v)]$ is $K_4$-free.  We next show that $1\le \delta(G[N(v)])\le4$   and  $\alpha(G[N(v)])=3$.   Suppose   $\delta(G[N(v)])\ge5$.  Since  $G$  has no   $K_7$ minor, by \cref{l:deg9},  $G[N(v)]$ is isomorphic to $\overline{K_3}+C_6$. Let $x, y$ be any two non-adjacent vertices of $C_6$.  Since $G$ is $7$-connected, there must exist $x,y\in N(v)$ such that $x, y$ are non-adjacent vertices of $C_6$ and  there exists  an $(x,y)$-path $P$ with internal vertices in $G\less N[x]$. By contracting   $P\less x$ onto $y$, we see that $G\se G[N[v]]+xy\se K_7$, a contradiction.    This proves that $\delta(G[N(v)])\le4$. 
Next suppose $\delta(G[N(v)])=0$.  Let $u\in N(v)$ be an isolated vertex in $G[N(v)]$. Since   $\alpha(G[N(v)])\le3$ and $G$ has no $K_7$ minor, we see that $\alpha(G[N(v)]\less u)=2$.  Then $G[N(v)]\less u$ is a graph on $8$ vertices with $\alpha(G[N(v)]\less u)=2$. By Lemma~\ref{l:H8}, $G[N(v)]\less u$ contains  $H_8$ as a subgraph.  Similar to the proof of \cref{l:deg8}(d), let $w_1, \dots, w_8$ be the vertices of $H_8$, as depicted in  Figure \ref{H8b}.  Then $w_1w_4\notin E(G)$ because $G[N(v)]$ is $K_4$-free. By Lemma~\ref{l:wonderful} applied to $N(v)$ with $S=\{w_1, w_4, u\}$ and $M=\{w_2w_7, w_3w_6, w_5w_8\}$, 
there exist pairwise vertex-disjoint paths $P_{27}$ with ends $w_2, w_7$, $P_{36}$ with ends $w_3, w_6$, $P_{58}$ with ends $w_5, w_8$, and all their internal vertices in $G\less N[v]$. Now by contracting each of the edges $w_1w_3$, $w_2w_4$ to a single vertex, and then all the edges  of  $P_{27}\less w_2$, $P_{36}\less w_3$ and $P_{58}\less w_5$ onto $N[v]$,  we see that  $G\se G[N[v]]+M\se  K_7$, a contradiction.  This proves that  $1\le \delta(G[N(v)])\le4$. \medskip

 It remains to show that  $\alpha(G[N(v)])=3$.  Suppose $\alpha(G[N(v)])=2$. Let $x \in N(v)$ such that   $d(x)=\delta(G[N(v)])$.  Let $A:=N[x]\cap N(v)$. Then  $|A|\le 5$ because $\delta(G[N(v)])\le4$. But then  $N(v)\less A$ is a clique of order $9-|A|\ge4$ because   $\alpha(G[N(v)])=2$, a contradiction.  
  \end{proof}

%
%\bibliographystyle{alpha}
%\bibliography{Zixia}

\section*{Appendix}
The codes we use for \cref{l:deg9} are provided on the next three pages (we follow the small program  available on  the third author's  website \href{https://thomas.math.gatech.edu/PAP/K9/}{https://thomas.math.gatech.edu/PAP/K9/}). Our small  program found five graphs $H$ with $|H|=9$,  $\delta(H)\ge5$ and no $K_6$ minor, and subject to these, $e(H)$ is minimal, that is, every edge in $H$ is incident with a $5$-vertex: $K_1+H_8$ (where $H_8$ is given in Figure~\ref{H8}), $\overline{K_3}+C_6$,  and three  more  given in Figure~\ref{f:NoK6}; each of these five graphs, except $\overline{K_3}+C_6$, has a $4$-clique.  

\begin{figure}[h]
\centering
\subfigure[][\label{fig:Gamma1}]{\includegraphics[scale=0.25]{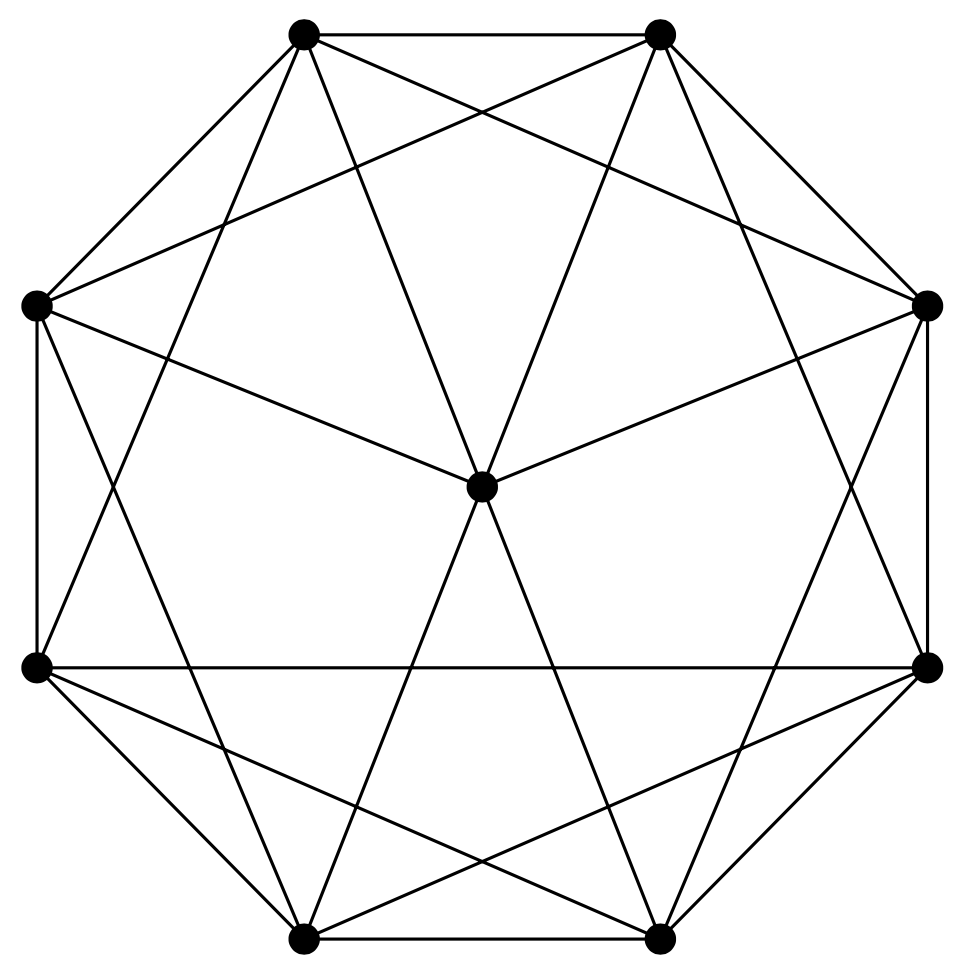}}
\quad\quad\quad
\subfigure[][\label{fig:Gamma2}]{\includegraphics[scale=0.26]{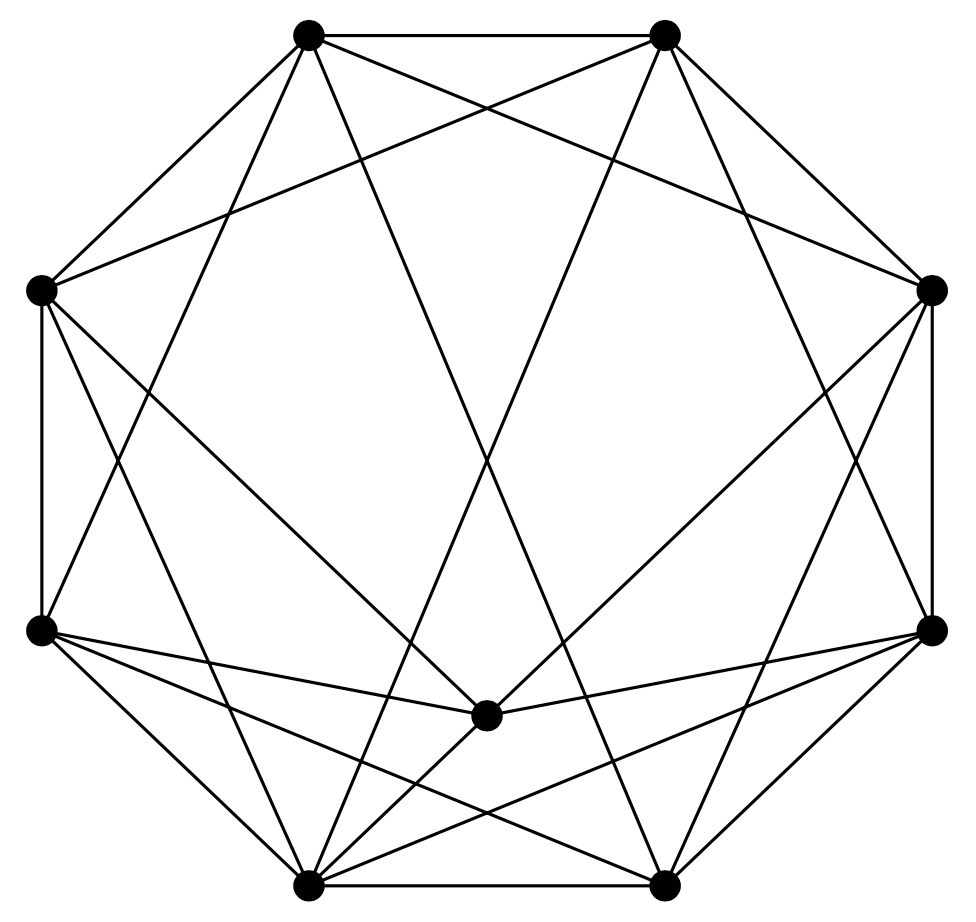}}
\quad\quad\quad
\subfigure[][\label{fig:Gamma3}]{\includegraphics[scale=0.185]{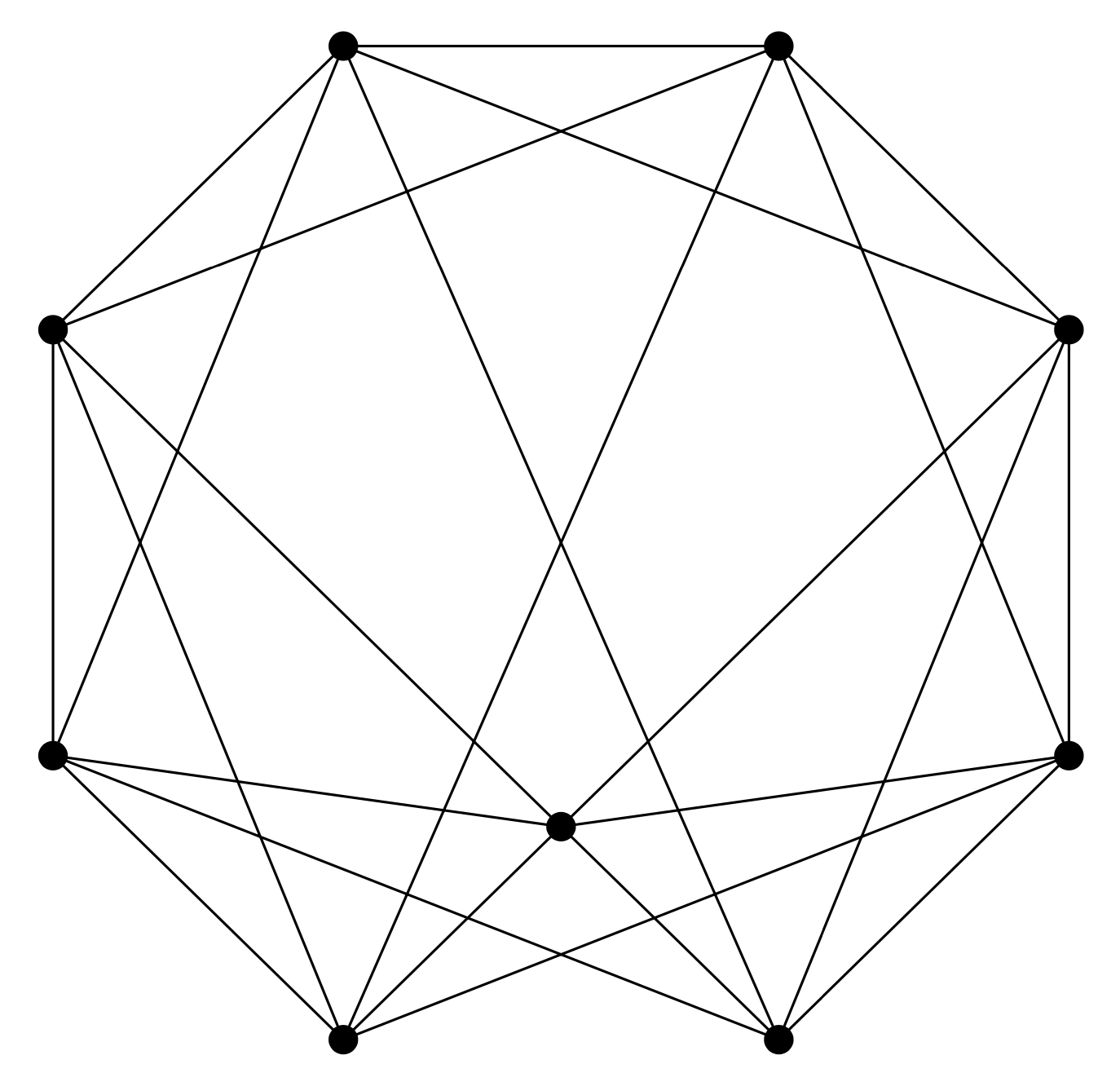}}
\caption{Three graphs with no $K_6$ minor.}
\label{f:NoK6}
\end{figure}
 
\begin{figure}[h]
\centering
 \includegraphics[scale=0.8]{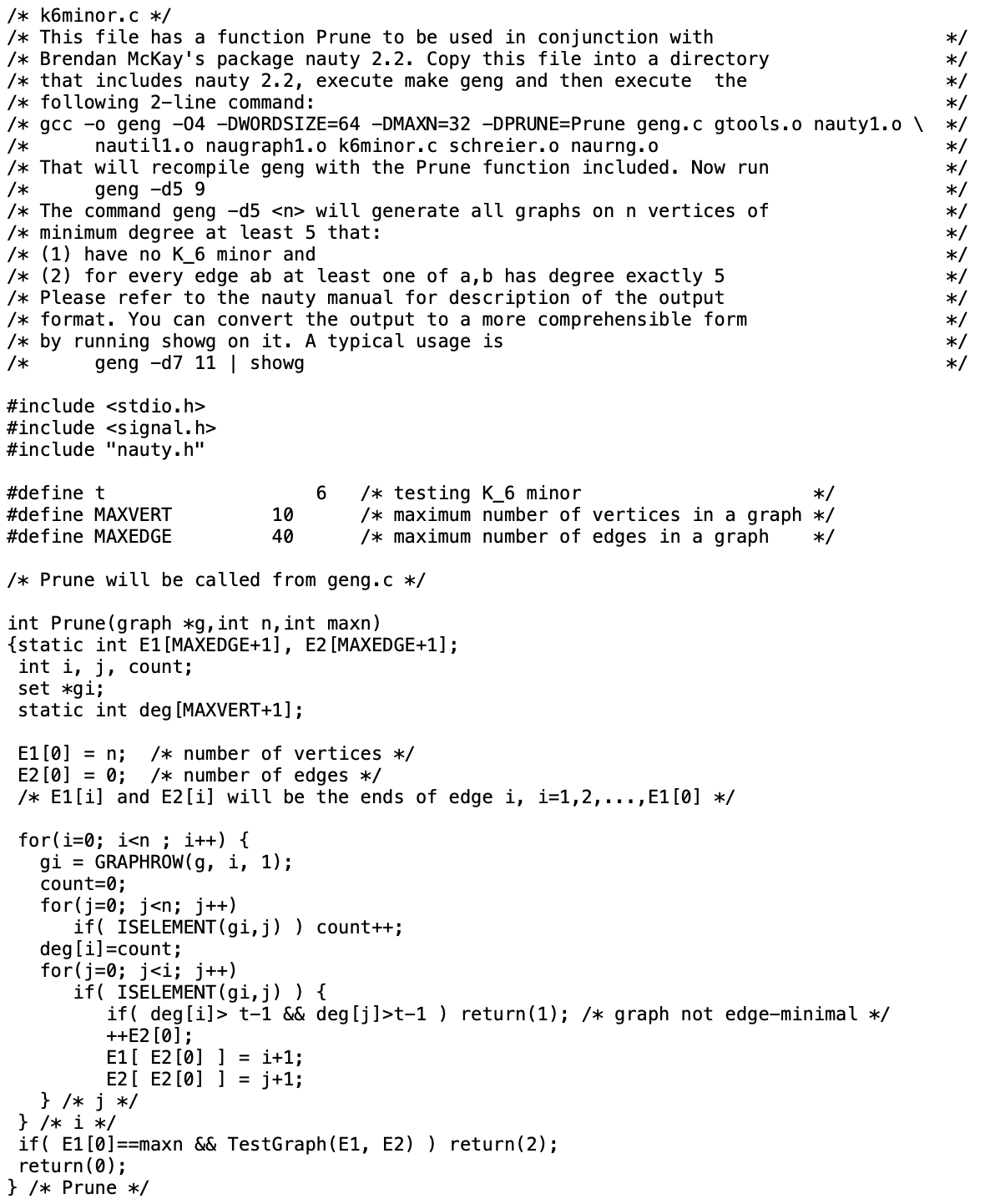}
 \end{figure}
\hskip 5cm

\begin{figure}[h]
\centering
 \includegraphics[scale=0.85]{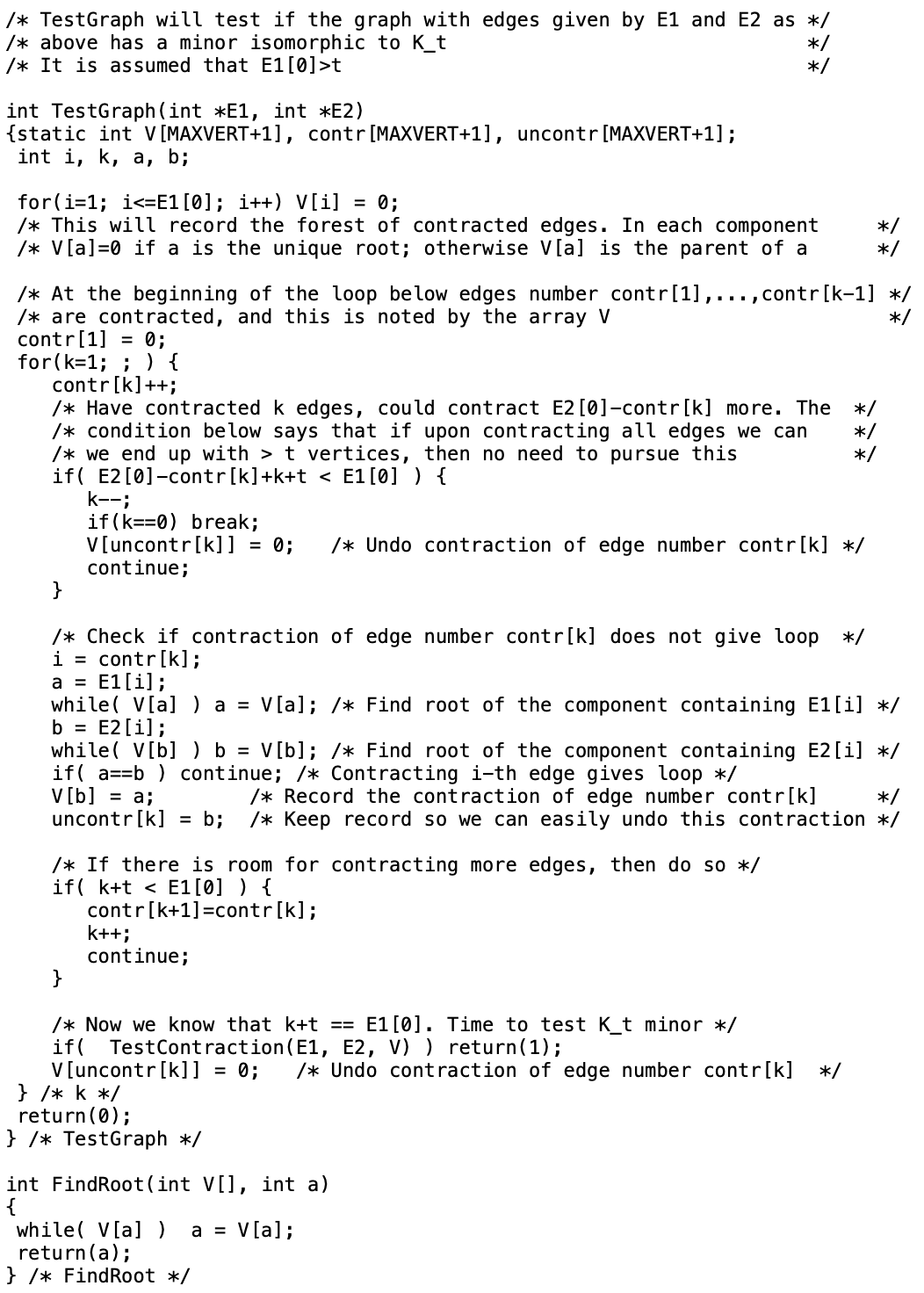}
 \end{figure}

\hskip 5cm
\begin{figure}[h]
\centering
 \includegraphics[scale=0.85]{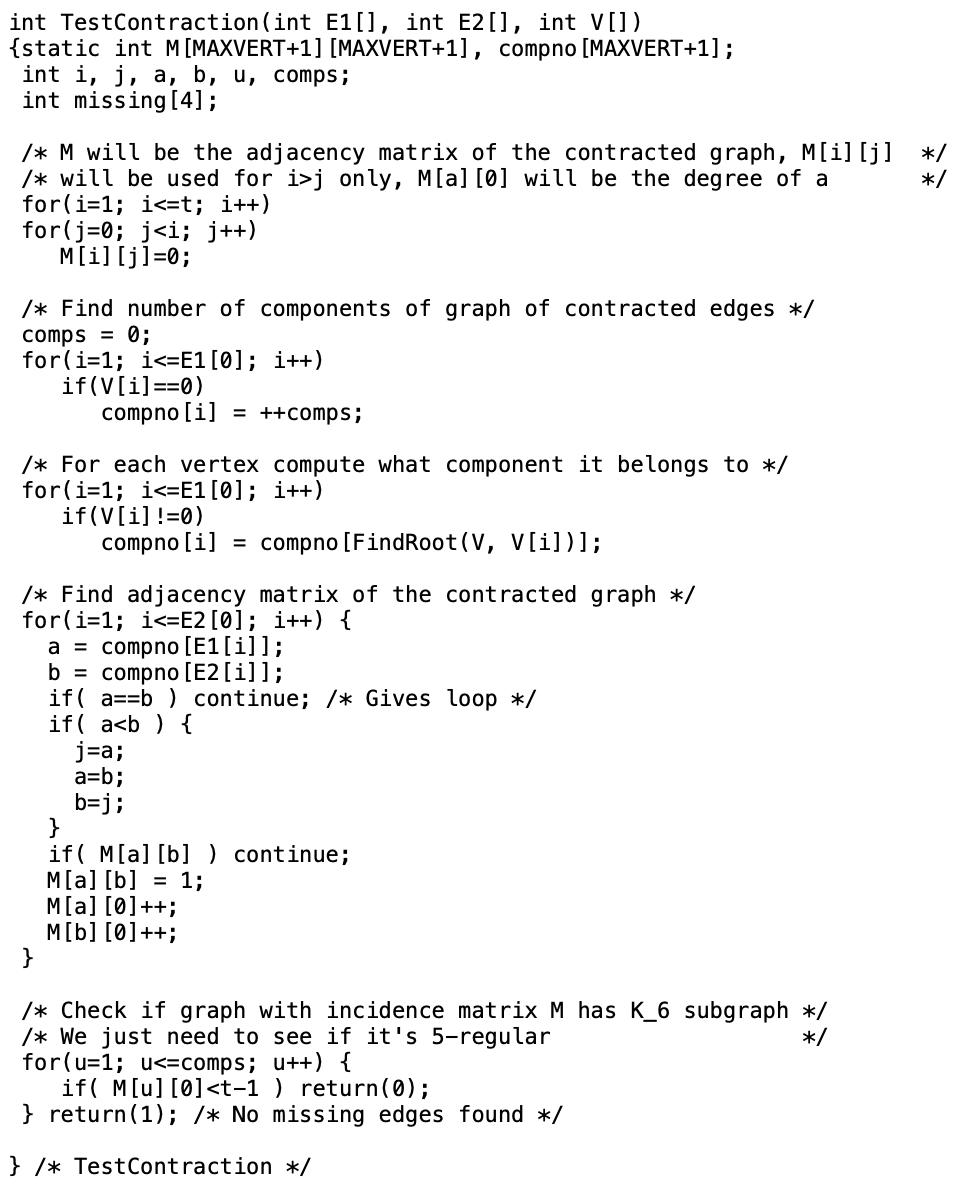}
 \end{figure}

\hskip 10cm

    \end{document}